\colorlet{cmar}{DeepSkyBlue2}
\colorlet{caga}{Goldenrod3}
\newcommand{\R}{\varmathbb{R}}
\DeclareMathOperator{\Int}{Int}
\newtheorem{thm}{Theorem}
\newtheorem{lem}[thm]{Lemma}
\newtheorem{prop}[thm]{Proposition}
\newtheorem{rem}[thm]{Remark}
\newproof{proof}{Proof}
\newproof{prthm}{Proof of Theorem~\ref{thm:stabn}}
\numberwithin{equation}{section}
\newcommand{\na}[1]{{\color{red}#1}}
\newcommand{\nb}[1]{{\color{blue}#1}}
\newcommand{\nc}[1]{{\color{DarkOliveGreen3}#1}}
\newcommand{\nd}[1]{{\color{Pink2}#1}}
\newcommand{\nf}[1]{{\color{Cyan3}#1}}
\newcommand{\nh}[1]{{\color{Orange3}#1}}
\begin{document}
\begin{frontmatter}

\title{Justification of quasi-stationary approximation in models of gene expression of a~self-regulating protein}

\author[pg]{Agnieszka Bart\l{}omiejczyk\corref{cor1}}
\ead{agnbartl@pg.edu.pl}

\author[uw]{Marek Bodnar}
\ead{mbodnar@mimuw.edu.pl}

\cortext[cor1]{Corresponding author}

\address[pg]{Faculty of Applied Physics and Mathematics, Gda{\'{n}}sk University of Technology,
        Gabriela Narutowicza 11/12, 80-233 Gda{\'{n}}sk, Poland.}

\address[uw]{Institute of Applied Mathematics and Mechanics,
        University of Warsaw, Banacha 2, 02-097 Warsaw, Poland.}

\begin{abstract}
We analyse a model of Hes1 gene transcription and protein synthesis with a negative feedback loop. The effect of multiple binding sites in the Hes1 promoter as well as the dimer formation process are taken into account. 
We consider three, possibly different, time scales connected with: (i) the process of binding to/dissolving from a binding site, (ii) formation and dissociation of dimers, (iii) production and degradation of Hes1 protein and its mRNA. Assuming that the first two processes are much faster than the third one, using the Tikhonov theorem, we reduce in two steps the full model to the classical Hes1 model. In the intermediate step two different models are derived depending on the relation between the time scales of processes (i) and (ii). The asymptotic behaviour of the solutions of systems are studied. We investigate the stability of the positive steady state and perform some numerical experiments showing differences in dynamics of the considered models.
\end{abstract}

\begin{keyword} 
biochemical reaction \sep Tikhonov theorem \sep asymptotic analysis \sep stability \sep negative feedback loop

\MSC 34C55 \sep 34C60 \sep 34D05 \sep 34K20 \sep 34K28 \sep 34K60 \sep 37N25
\end{keyword}

\end{frontmatter}
\section{Introduction}
Regulation of gene expression in eukaryotic cells is one of the most important processes during the life of the cell and the whole organism. How cells work depends on the signals that reach them. The cell's response is based on changing the expression of genes, and thus on the change in the amount of protein produced. Both silencing and overexpression as defects in gene regulation cause unfavorable changes. Thus, understanding the structure and mechanism of gene expression regulation is necessary to understand the functioning of many biological and chemical processes related mainly to genetic regulation. It is also necessary for understanding the emergence of diseases, and thus effective fight against them. Many cancers arise due to overexpression of major regulatory genes, i.e. genes encoding a regulatory protein. 

One such regulatory protein is Hes1 (hairy and enhancer of split 1), which belongs to the helix-loop-helix (bHLH) family of transcription proteins, i.e. DNA-binding proteins in the promoter region or in another region where regulation of transcription processes occurs. Hes1 protein deficiency in mice leads to premature cell differentiation, resulting in defects in brain tissue. In turn, the overexpression of Hes1 has been observed in many cancers, including lung cancer, ovarian cancer and colon cancer as well as germ cell tumors~\cite{Li18JC}. Hes1 also induces the activation of PARP1 in acute lymphoblastic leukemia, and patient samples during the leukemic crisis showed an elevated level of Hes1. This suggests that Hes1 protein may induce tumor cell growth. In~\cite{Liu15CBT} one more possibility of unfavorable Hes1 activity was described. The authors report that Hes1 may promote tumor metastases, including metastases to the tumor bone. This is related to the effect of Hes1 on the proliferation of cancer cells and migration abilities. Moreover, in~\cite{Li18JC} the relationship between Hes1 and breast cancer was examined in order to identify potential causes of increased invasion and metastasis of breast cancer. The authors examined Hes1 expression using Western blot analyses of freshly isolated breast cancer tissues and observed that patients with low levels of Hes1 expression have an increased survival compared to patients with high levels of Hes1 expression.

It is worth pointing out that many signaling pathways are involved in the regulation of Hes1 gene expression. It is important that Hes1 lies at the crossroads of many signaling pathways. For example Hes1 is regulated by Notch signaling pathway, which is mainly involved in the regulation of hematopoietic cell function and in tumor vasculature. Targeting in Hes1 can therefore cause fewer side effects as many other target genes of the Notch pathway will remain intact, \cite{shimojo2011,Li18JC}.

Hes1 protein as a~transcriptional repressor, inhibits its own transcription by directly binding to its 
own promoter, which blocks transcription of Hes1 mRNA~(see for instance \cite{hirata02science, sasai, takebayashi}). 
When the transcription of Hes1 mRNA~is repressed by this negative feedback, Hes1 protein soon disappears because it 
is rapidly degraded by the ubiquitin-proteasome pathway. Disappearance of Hes1 protein allows then the next 
round of transcription. In this way, Hes1 protein autonomously starts oscillatory expression induced
by a~negative feedback loop, see \cite{hirata02science}. Another example of this is the mechanism of p53 and Mdm2 proteins in which oscillations resulting from stress were observed, \cite{nasz_p53}.

The classical mathematical model which describes the gene expression of Hes 1 protein was proposed by Monk~\cite{monk03currbiol}.
This model includes four basic biochemical processes, i.e. transcription (synthesis), translation (production), protein degradation and its mRNA:
\begin{equation*}
   \begin{aligned}
      \fbox{\footnotesize{change of mRNA concentration}} &=\fbox{\footnotesize{transcription rate}} -\fbox{\footnotesize{mRNA degradation rate}}\\
      \fbox{\footnotesize{change of Hes1 protein concentration}} &=\fbox{\footnotesize{translation rate}} -\fbox{\footnotesize{Hes1 protein degradation rate}}.
   \end{aligned}
\end{equation*}
Moreover, in the model proposed by Monk~\cite{monk03currbiol} it was assumed that the intensity of mRNA production is a decreasing function 
of the concentration of the protein and the transcription time was taken into account. In~\cite{jensen03febslett} this suppression function was assumed to be a~Hill function 
with the Hill coefficient greater than two (due to the assumption that dimer binding to DNA~is co-operative one).
There are several approaches in the literature for modeling the gene expression of the hes1 protein. Hirata in~\cite{hirata02science} postulated the existence of a third non-linear component in the Hes1 model that causes oscillations. However, most models describing small autoregulation networks, such as the Hes1 model, are based on delay differential equation (DDE) and the oscillatory behaviour is caused by the delay in transcription and/or translation processes due to the Hopf bifurcation. In Bernard~et~al.~\cite{bernard06philtrans} a version of the Monk Hes1 model with delay in transcription process was considered, while in~\cite{mbab12nonrwa} we have studied the model with delay in both transcription and translation processes. In particular, we showed that the crucial factor for the appearance of oscillations is a sum of time delays in transcription and translation. Moreover, the direction of appearing Hopf bifurcation was calculated. In addition to the models based on delay differential equations, there exist models of the Hes1 regulatory pathway focusing on the spatial aspect, i.e. transport between the nucleus and the cytoplasm \cite{Sturrock13,Sturrock11jtb,Sturrock12bmb,Szymanska14jtb}. Some mathematical analysis of this model can be found in~\cite{Chaplain15,Lachowicz16dcds}. 

In this paper we consider a modification of a classical Hes1 gene expression model. Our main goal is to justify mathematically the form of the classical Hes1 model and to show that the stability of the system depends on the number of binding sites that is places at the Hes1 promoter, at which the complexes of Hes1 protein bind blocking the protein transcription. 

It is worth pointing out that before a protein binds to its own DNA, it creates a dimer that inhibits the transcription of its mRNA. For this reason, in the system we propose, we take into account the reaction describing the binding of the dimer to the binding site in the regulatory region of the gene. Since DNA promoter usually has multiple binding sites, \cite{takebayashi}, their number has a strong effect on the dynamic behaviour of the Hes1 protein system. The case of three binding sites was studied earlier by Zeiser~\cite{zeiser06tbmm}. The full model that describes dimer formation as well as many binding sites in the protein promoter leads to the system of $n+3$ ordinary differential equations (ODEs), where $n$ is the number of the binding sites. Due to the complexity of this process, we will simplify it using quasi-stationary approximation. 
Namely, we assume that the process of dimers binding to and dissociating from DNA promoter is much faster than other considered processes, and thus we consider that the probability that DNA is in active state can be described as a function of proteins' dimers.
Mathematically, this means that we can write a small parameter (or parameters) on the left-hand side of equations, and then use the Tikhonov theorem, see~\cite{tikhonov}. The Tikhonov method reduces variables of the complex systems using the assumption that some parameters are small. We emphasize once again that there are two processes here: dimer formation and binding to the promoter, and that both may have different time scales. In this way, simplification of the entire system to the classical one, we can follow two (or even three) different ways, depending on the time scales of the processes mentioned earlier, see Fig.~\ref{fig.diagram}.

\begin{figure}[h]
   \centerline{\includegraphics{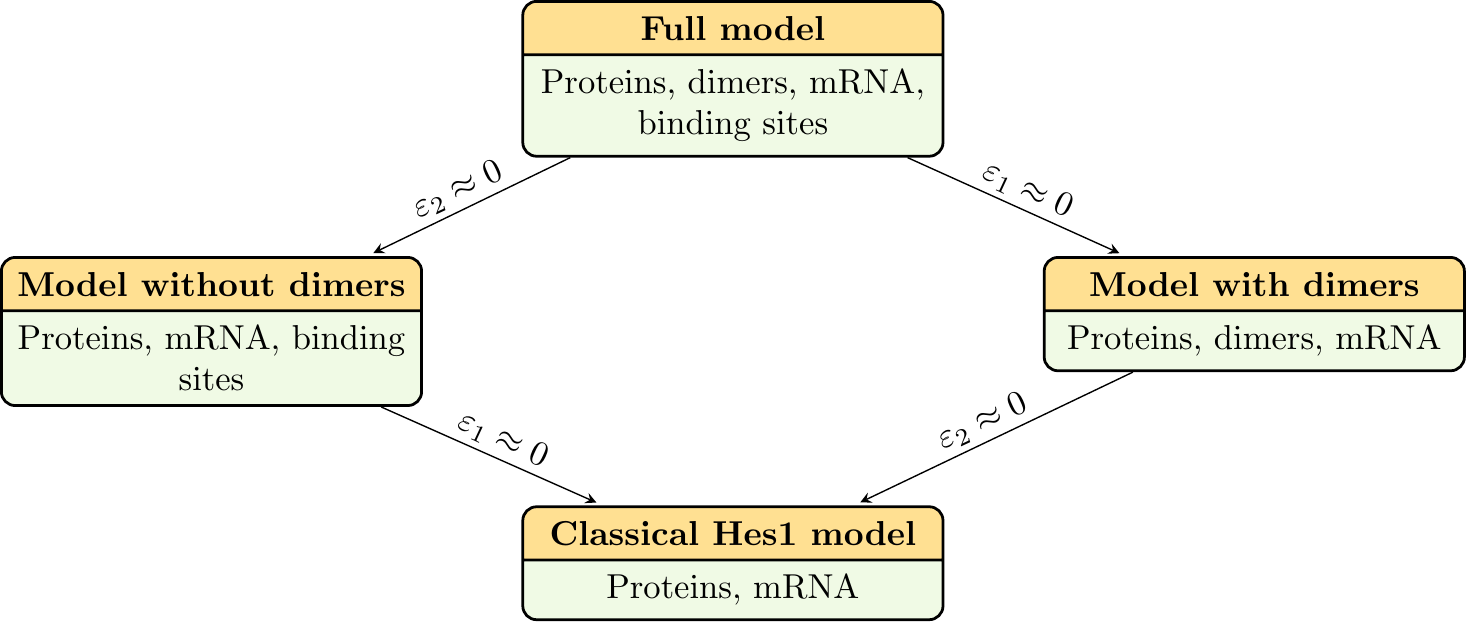}}
   \caption{A~diagram of models considered in this paper and connection between them.\label{fig.diagram}}
\end{figure}

\medskip

The paper is organized as follows. In Section~\ref{sec:fulsys} we derive a general gene expression model, we formulate its basic mathematical properties and we prove the existence of a unique positive steady state. Next, using quasi-stationary approximation we formally obtain three simplified models under different assumptions on time scales. In Section~\ref{sec:justT} using Tikhonov theorem we 
rigorously justify the form of reduced models formally derived in Section~\ref{sec:fulsys}. In Section~\ref{sec:stab} we 
study stability of the positive steady state of the reduced systems. The paper is concluded by numerical simulations and discussions presented in Section~\ref{sec:concl}.

\section{Gene expression models}\label{sec:fulsys}
\subsection{Model derivation}
Hes1 has at least three binding sites (see for instance~\cite{takebayashi}). If the Hes1 dimer binds to one of these 
sites it blocks transcription. Let us derive equations that would describe probabilities that 
a given number of sites are occupied. To this end, we assume that the concentration of Hes1 dimers 
is given by $y_2$. We construct equations for the change of the concentration of Hes1 dimers later. We show that 
it is enough to consider probabilities that $j$ sites are occupied and the particular configuration 
of free-occupied sites is not important as long as we assume that probability that Hes1 dimer bounds 
to a~free site does not depend on a particular site and is the same for all $n-j$ free sites 
(but the probability may depend on the number of free/occupied sites). 

To finish argumentation we need some notation to be 
introduced. Let $\Sigma$ denote the set of all permutations of the $n$-element set. Denote by $e_j^n$ the vector 
$(1,\ldots,1,0,\ldots,0),$ where $1$ is on the first $j$th positions and $0$ on the last $(n-j)$th positions. 
Let 
\[
e_{j,k}^n(k)=1-e_j^n(k), \quad 
e_{j,k}^n(\ell)=e_j^n(\ell), \;\; \ell \in \{1,2,\dotsc,n\}\setminus\{k\}.
\]
We see that $e_{j,k}^n$ denote the vector that differs from $e_j^n$ only on the $k$th positon.
For $\sigma\in\Sigma$ let 
$x_{\sigma(e_j^k)}$ denote the probability that the configuration of occupied and free sites is 
given by a~vector $\sigma(e_j^k)$, where $1$ on the $\ell$th coordinate of the vector $\sigma(e_j^k)$ means that 
the $\ell$th site is occupied and $0$ means that it is free. Let us 
also denote by $k_j y_2$ the~probability that a~Hes1 dimer bounds to one of free sites given that 
$j$ is occupied and let $\gamma_j$ be an intensity of dissolving of a~Hes1 dimer assuming that there are 
$j$ occupied sites. The change of the probability $x_{\sigma(e_j^k)}$ (we assume that $1\le j \le n-1$ for simplicity),
is due to one of the following actions  (see Fig.~\ref{fig.reakcja.og}): 
\begin{itemize}
\item the Hes1 dimer may bound to one of $n-j$ free sites, to each with probability $k_j y_2/(n-j)$;
\item one of $j$ bounded dimers may dissolve, each with intensity $\gamma_j$.
\end{itemize}   

\begin{figure}[!ht]
   \centerline{\includegraphics[]{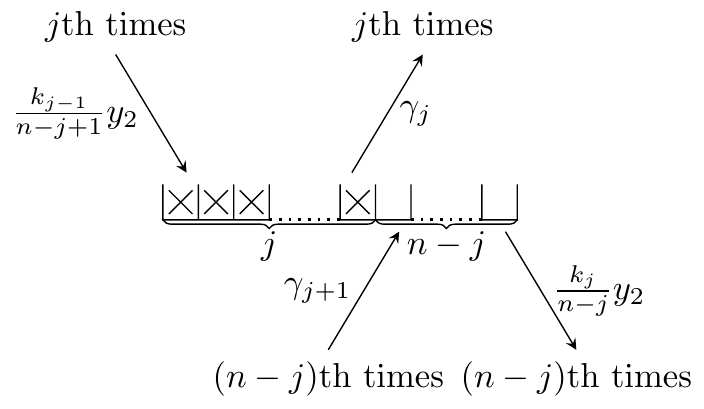}}
   \caption{The scheme of the process of binding dimers to the promoter and dissociating them. The crosses indicate occupied sites.\label{fig.reakcja.og}}
\end{figure}

Using this assumptions and the mass action law we write 
\begin{equation}\label{ogolny}
	x'_{\sigma(e_j^n)} = \sum_{\ell=1}^j \frac{k_{j-1}}{n-j+1} y_2x_{\sigma(e_{j,\ell}^n)} 
				+ \sum_{\ell=j+1}^n \gamma_{j+1} x_{\sigma(e_{j,\ell}^n)} 
				-\bigl(k_jy_2+j\gamma_j\bigr)x_{\sigma(e_j^n)},
\end{equation}
for $1\le j \le n-1$. For the cases $j=0$ and $j=n$, we have 
\begin{equation}\label{ogolny:brzeg}
	\begin{split}
		x'_{e_0^n} &= \sum_{\ell=1}^n \gamma_{1} x_{e_{0,\ell}^n} -k_0y_2x_{e_0^n}, \\
		x'_{e_n^n} &= \sum_{\ell=1}^n  k_{n-1} y_2 x_{e_{n,\ell}^n} 
				-n\gamma_n x_{e_n^n}.
	\end{split}
\end{equation}
Now, let us denote 
\[
	x_j = \sum_{\sigma\in\Sigma} x_{\sigma(e_j^n)}. 
\]
We note that $\sigma(e_{j,k}^n)$ is one of $\sigma(e_{j-1,k}^n)$ if $1\le k\le j$ 
and one of $\sigma(e_{j+1,k}^n)$ if $j+1\le k\le n$. This, together with the fact that 
Eqs.~\eqref{ogolny} and \eqref{ogolny:brzeg} are linear with respect to $\sigma(e_{j,k}^n),$
leads to the following equations 
\begin{equation}\label{ogolny:red}
	\begin{split}
		x_0' & = \gamma_1 x_1 - k_0y_2 x_0,\\
		x_j' & = k_{j-1}x_{j-1}y_2 + (j+1)\gamma_{j+1}x_{j+1} - \bigl(k_jy_2+j\gamma_j\bigr)x_j, \quad 1\le j \le n-1,\\
		x_n' & = k_{n-1}y_2 x_{n-1} - n\gamma_n x_n
	\end{split}
\end{equation}
and
\begin{equation}\label{trojka:rest_ogolny}
	\begin{split}
		y_1'&=2\gamma_yy_2-2k_y\,y_1^2+r_y z-\delta_y\,y_1,\\
		y_2'&=-\sum_{j=0}^{n-1}k_jx_jy_2+\sum_{j=1}^{n}	j\gamma_j\,x_j-\gamma_yy_2+k_y\,y_1^2,\\
		z'&=r_z x_0-\delta_{z}\,z,
	\end{split}
\end{equation}
where $\gamma_y$ denotes an intensity of dissolving a~Hes1 dimer, $k_y$ denotes an intensity of a~formation
of Hes1 dimers, $r_y$ and $r_z$ are production rates of Hes1 and its mRNA, respectively, while 
$\delta_y$ and $\delta_z$ are degradation rates of Hes1 and its mRNA, respectively. 

\subsection{Non-dimensionalisation and basic mathematical properties}
Before analysing model~\eqref{ogolny:red}--\eqref{trojka:rest_ogolny} we express it in non-dimensional terms, thereby reducing the number of parameters.

The right-hand side of system~\eqref{ogolny:red}--\eqref{trojka:rest_ogolny} 
is a~polynomial, thus existence of a~unique solution to this system is immediate. The fact that $x_0+ x_1+\cdots+x_n=1$ 
implies that system~\eqref{ogolny:red}--\eqref{trojka:rest_ogolny} can be reduced to a~system of $n+3$ equations. However, 
we find it convenient to write the system in the present perturbed form.
We show that this problem can be reduce to a~lower-dimensional problem.
We choose the scalling in such a way, that the positive steady state (which is a unique steady state of system~\eqref{ogolny:red}--\eqref{trojka:rest_ogolny}, as we prove later) has a very simple coordinates. In order to do that we proceed in the following manner.
Let $q$ be the positive solution to the following equation
\[
\frac{\delta_y\delta_z}{r_yr_z}q=\frac{1}{1+\sum_{j=1}^{n}\frac{1}{j!}\frac{k_0\ldots k_{j-1}}{\gamma_1\ldots\gamma_j}\left(\frac{k_y}{\gamma_y}q^2\right)^j}.
\]
We introduce non-dimensional quantities putting
   \[
   \tilde x_j=x_j,\quad
   \tilde y_1=\frac{y_1}{q},\quad 
   \tilde y_2=\frac{\gamma_y y_2}{k_y q^2},\quad 
   \tilde z=\frac{r_y}{\delta_y\,q}z,\quad 
   \tau= k_yq^2\,t
   \]
and
   \begin{align*}
   k&=\frac{2}{q}, &
   \delta_1&=\frac{\delta_y}{k_yq^2},& 
   \tilde\gamma_j&=\gamma_j\frac{\gamma_y}{k_0k_yq^2},&
   \varepsilon_1 &=\frac{\gamma_y}{k_0}  &
   \tilde k_j&= \frac{k_j}{k_0},\\   
   r_0&= \frac{r_yr_z}{\delta_y\delta_z\, q}, &
   \delta_2&=\frac{\delta_z}{k_yq^2},& 
   \theta &=\frac{k_0}{\gamma_y}, &
    \varepsilon_2 &=\frac{k_yq^2}{\gamma_y}
   \end{align*}
for $0\leq j\leq n.$ Note that $\tilde k_0 =1.$ 
Nevertheless, we keep writing $\tilde k_0$ to get natural and (in some sense) more symmetric formulas.

\begin{rem}
We observe, that due to the definitions of $q$ and $r_0$ dimensionless parameters fulfil the following equality
 \begin{equation}\label{rown_r0}
   r_0=1+\sum_{j=1}^{n}\frac{1}{j!}\frac{\tilde k_0\ldots \tilde k_{j-1}}{\tilde \gamma_1\ldots\tilde \gamma_j}.
   \end{equation}
\end{rem}
For notational simplicity we drop the tilde on $x_j,$ $y_1,$ $y_2,$ $z,$ $k_j$ and $\gamma_j$ ($0\leq j\leq n$) and, in consequence, the non-dimensional version of the system~\eqref{ogolny:red}--\eqref{trojka:rest_ogolny} reads as follows 
\begin{equation}\label{ogolny:epsilony_red}
   \begin{split}
   \varepsilon_1 x_0'&=\gamma_1x_1- k_0 x_0 y_2,\\
   \varepsilon_1 x_j'&= k_{j-1}x_{j-1}y_2+(j+1)\gamma_{j+1}x_{j+1}-\left( k_jy_2+j\gamma_j\right)x_j,\quad 1\leq j\leq n-2,\\
   \varepsilon_1 x_{n-1}' & = k_{n-2}x_{n-2}y_2 + n\gamma_{n}\left(1-\sum_{j=0}^{n-1}x_j\right) - \bigl(k_{n-1}y_2+(n-1)\gamma_{n-1}\bigr)x_{n-1},\\
   y_1'&=k(y_2-y_1^2)+\delta_1\bigl(z-y_1\bigr),\\
   \varepsilon_2 y_2'&=\theta\left(	-\sum_{j=0}^{n-1} k_jx_jy_2+ \sum_{j=1}^{n-1}j\gamma_jx_j+n\gamma_n\biggl(1- \sum_{j=0}^{n-1}x_j\biggr)\right)-y_2+y_1^2,\\
   z'&=\delta_2\bigl(r_0 x_0- z\bigr).
   \end{split}
\end{equation}
From now on we deal with dimensionless model.

\subsubsection{Uniqueness, existence, and non-negativity of solutions and existence of a unique positive steady state}

\begin{thm} \label{invariant}
   The solutions to~\eqref{ogolny:epsilony_red} exist are nonnegative, unique and defined for all $t$.
   Every set 
   \[
   \Omega = \Bigl\{ (x_0,\ldots,x_{n-1},y_1,y_2,z)\in  \R^{n+3} : 0\le x_j,\, \sum_{j=0}^{n-1}x_j\le 1,\, 0\le y_1\le \bar y_1,\, 0\le y_2\le \bar y_2,\, 0\le z\le r_0,\, 0\leq j\leq n-1\Bigr\} 	
   \] 
   such that constants $\bar y_1$, $\bar y_2$ fulfil
   \begin{equation}\label{ogr}
   \bar y_1 = r_0+\frac{k}{\delta_1}\theta_{\gamma}, 
   \quad
   \bar y_2 = \bar y_1^2 + \theta_{\gamma},  \quad \theta_{\gamma} = \theta \sum_{j=1}^{n}j\gamma_j,
   \end{equation}
   is invariant for the evolution system~\eqref{ogolny:epsilony_red}.
\end{thm}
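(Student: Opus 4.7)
My plan is to prove all four statements simultaneously by a standard invariance argument. First I would note that the right-hand side of \eqref{ogolny:epsilony_red} is polynomial in the state variables, hence locally Lipschitz, so the Picard--Lindel\"of theorem yields a unique local solution for any initial condition. Global existence and non-negativity then follow automatically once invariance of the compact set $\Omega$ is established, since any solution starting inside $\Omega$ is bounded and cannot leave the set in finite time. So the heart of the proof is checking the Nagumo-type boundary condition: on each face of $\partial\Omega$ the vector field points into (or is tangent to) $\Omega$.

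The routine faces I would dispose of first. On $\{x_j=0\}$ for $0\le j\le n-1$ every positive term on the right-hand side is nonnegative while the only negative contribution is proportional to $x_j$, so $\varepsilon_1 x_j'\ge 0$. On $\{\sum_{j=0}^{n-1}x_j=1\}$ I would exploit the hidden conservation: if we reintroduce $x_n:=1-\sum_{j=0}^{n-1}x_j$, the full equations \eqref{ogolny:red} give $\sum_{j=0}^{n}x_j'=0$, so at this face $\sum_{j=0}^{n-1}x_j'=-x_n'=-k_{n-1}y_2x_{n-1}\le 0$. On $\{y_1=0\}$ one has $y_1'=ky_2+\delta_1 z\ge 0$; on $\{z=0\}$, $z'=\delta_2 r_0 x_0\ge 0$; on $\{z=r_0\}$, $z'=\delta_2 r_0(x_0-1)\le 0$; and on $\{y_2=0\}$, using $\sum_{j=0}^{n-1}x_j\le 1$, the expression
\[
\varepsilon_2 y_2'\big|_{y_2=0}=\theta\!\left(\sum_{j=1}^{n-1}j\gamma_j x_j+n\gamma_n\Bigl(1-\sum_{j=0}^{n-1}x_j\Bigr)\right)+y_1^2
\]
is manifestly nonnegative.

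The delicate step, which I expect to be the main obstacle, is checking the two coupled upper bounds $\bar y_1$ and $\bar y_2$ defined in \eqref{ogr}. These constants are designed precisely so that the argument closes. On $\{y_2=\bar y_2\}$ I would first bound the bracketed term in the $y_2$-equation using $x_j\ge 0$, $\sum_{j=0}^{n-1}x_j\le 1$ and the definition $\theta_\gamma=\theta\sum_{j=1}^n j\gamma_j$, to get
\[
\theta\!\left(-\sum_{j=0}^{n-1}k_j x_j y_2+\sum_{j=1}^{n-1}j\gamma_j x_j+n\gamma_n\Bigl(1-\sum_{j=0}^{n-1}x_j\Bigr)\right)\le\theta_\gamma,
\]
so that $\varepsilon_2 y_2'\le\theta_\gamma-y_2+y_1^2\le\theta_\gamma-\bar y_2+\bar y_1^2=0$, where the last equality is exactly the definition of $\bar y_2$. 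On $\{y_1=\bar y_1\}$ I would use $y_2\le\bar y_2$ and the previous identity to write $k(y_2-y_1^2)\le k(\bar y_2-\bar y_1^2)=k\theta_\gamma$, while $\delta_1(z-y_1)\le\delta_1(r_0-\bar y_1)=-k\theta_\gamma$ by the definition of $\bar y_1$; adding these gives $y_1'\le 0$.

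With all boundary inequalities verified, $\Omega$ is positively invariant by the Nagumo theorem, which yields the remaining claims: solutions starting in $\Omega$ stay in $\Omega$, hence are bounded, hence extend to all $t\ge 0$, and non-negativity is automatic from the invariance. The only subtlety I would need to comment on is that the chain of estimates for the upper bounds relies on the precise algebraic coupling $\bar y_2=\bar y_1^2+\theta_\gamma$ and $\bar y_1=r_0+k\theta_\gamma/\delta_1$, which is exactly why these particular constants were chosen.
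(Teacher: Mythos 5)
Your proposal is correct and follows essentially the same route as the paper: checking that the vector field points inward on each face of $\Omega$, using the estimates $\varepsilon_2 y_2'\le\theta_\gamma-y_2+y_1^2$ and $y_1'\le k(y_2-y_1^2)+\delta_1(r_0-y_1)$ together with the defining relations $\bar y_2=\bar y_1^2+\theta_\gamma$ and $\bar y_1=r_0+\tfrac{k}{\delta_1}\theta_\gamma$. The only difference is presentational: the paper argues the $(y_1,y_2)$ bounds geometrically via the two curves in \eqref{pseudo_izo} (Fig.~\ref{fig.izo}) and treats the $x$-, $z$- and nonnegativity faces as immediate, whereas you verify all faces algebraically, which is equivalent.
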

\begin{proof}
   Since $\sum_{i=0}^n x_i(t) = 1$ and all variables are positive we immediately have $0\le x_0(t)\le 1$. Thus, from the last equation
   of~\eqref{ogolny:epsilony_red}, we get 
   \[
   z'(t) \le \delta_2\bigl(r_0 - z\bigr)\; \Longrightarrow \; z(t) \le \max\Bigl\{z(0),r_0\Bigr\}.
   \]
   If the initial condition is from $\Omega$ then $z(t)\le r_0$. 
   For the similar reason, the equations for $y_1$ and $y_2$ can be estimated as
   \[
   y_1' \le k(y_2-y_1^2)+ \delta_1\bigl(r_0-y_1\bigr), \quad
   y_2' \le 
   \theta\left(	\sum_{j=1}^{n-1}j\gamma_jx_j+n\gamma_n\right)-y_2+y_1^2\le 
   = \theta_\gamma-y_2+y_1^2,
   \]
   where we used the definition of $\theta_\gamma$. 
   Let draw two curves
   \begin{equation}\label{pseudo_izo}
   y_2 = y_1^2+\theta_{\gamma}, \quad 
   y_2 = y_1^2+\frac{\delta_1}{k}\Bigl(y_1-r_0\Bigr),
   \end{equation}
   in the $(y_1,y_2)$ plane (see the solid red and dashed blue curve, respectively, in Fig.~\ref{fig.izo}). Note, that $y_1$ 
   decreases below the second curve while $y_2$ decreases above the first one. Note also that these two curves intersect 
   at the point 
   \[
   \bar y_1 = r_0+\frac{k}{\delta_1}\theta_{\gamma}>0.
   \]
   The line connecting the point  $(0,\bar y_2)$ with $(\bar y_1,\bar y_2)$ is above the curve
   $y_2 = y_1^2+\theta_{\gamma}$ (the solid red line in Fig.~\ref{fig.izo}) and thus, $y_2'<0$.
   The line connecting the point $(\bar y_1,0)$ with $(\bar y_1, \bar y_2)$ is below the curve
   $y_2 = y_1^2+\frac{\delta_1}{k}\Bigl(y_1-r_0\Bigr)$ 
   (the dashed blue line in Fig.~\ref{fig.izo}) and thus, $y_1'<0$. 
   This shows that $y_1(t)$, $y_2(t)$ cannot escape the region bounded by these hyperplanes, which completes 
   the proof.
      
   \begin{figure}[!htb]
      \centerline{\includegraphics[]{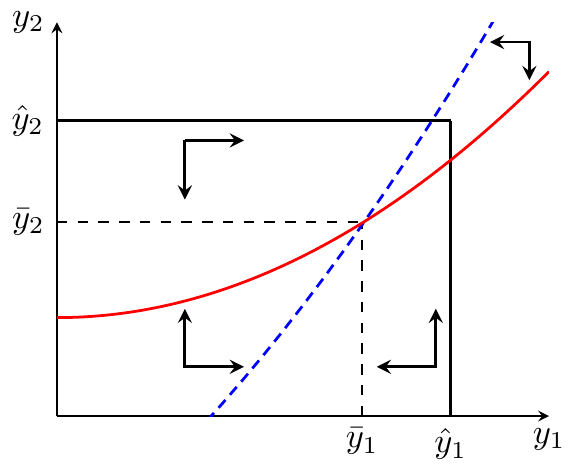}}
      \caption{The sketch of the curves given by~\eqref{pseudo_izo}. The arrows denote the direction 
         of the vector field.\label{fig.izo}
      }
   \end{figure}
	\qed
\end{proof}

   \begin{rem} 
      For any constants $\hat y_1$, $\hat y_2$ fulfils 
      $\bar y_1 < \hat y_1$, $\hat y_1^2+\theta_{\gamma}< \hat y_2 < \hat y_1^2+\frac{\delta_1}{k}\Bigl(\hat y_1-r_0\Bigr)$
      a set 
      \[
      \hat \Omega = \Bigl\{ (x_0,\ldots,x_{n-1},y_1,y_2,z)\in  \R^{n+3} : 0\le x_i,\, \sum_{i=0}^{n-1}x_i\le 1,\, 0\le y_1\le \hat{y}_1,\, 0\le y_2\le \hat y_2,\, 0\le z\le r_0,\, 0\leq i\leq n-1\Bigr\} 	
      \] 
      is invariant under the evolution of system~\eqref{ogolny:epsilony_red}. Note also that the inequality $\bar y_1 < \hat y_1$ implies that $k\theta_\gamma<\delta_1\bigl(\hat y_1-r_0\bigr)$.
   \end{rem}

\begin{prop}
There exists exactly one non-negative steady state $(\bar x_0, \bar x_1,\ldots\bar x_{n-1},1,1,1)$ of system~\eqref{ogolny:epsilony_red}, where
   \begin{equation}\label{ss:ogolny}
   \bar{x}_{0}=\frac{1}{r_0},\quad \bar{x}_{j}=\frac{1}{j!}\cdot\frac{ k_0 k_1\cdots  k_{j-1}}{\gamma_1\gamma_2\cdots \gamma_j}\cdot \frac{1}{r_0}, \; 1\leq j\leq n-1, \quad \bar{y}_1 =1, \quad \bar{y}_2 =1, \quad \bar{z}=1.
   \end{equation}
\end{prop}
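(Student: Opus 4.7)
My plan is to set every derivative to zero and solve the resulting algebraic system by exploiting its lower-triangular-recursive structure in the $x_j$ block, then reduce the problem to a single scalar equation in one of the remaining unknowns.

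First, I would handle the promoter block. The equation $\varepsilon_1 x_0'=0$ yields $\gamma_1 x_1 = k_0 y_2 x_0$, the so-called detailed balance at level $j=0$. Plugging this into $\varepsilon_1 x_1'=0$ cancels the $k_0 x_0 y_2$ term and leaves $2\gamma_2 x_2 = k_1 y_2 x_1$. An induction up to $j=n-2$ yields detailed balance at each level, giving
\[
	x_j \;=\; \frac{1}{j!}\cdot \frac{k_0 k_1 \cdots k_{j-1}}{\gamma_1 \gamma_2 \cdots \gamma_j}\, y_2^j\, x_0,\quad 1\le j\le n-1.
\]
The $\varepsilon_1 x_{n-1}'=0$ equation, after using detailed balance up to $j=n-2$, also collapses to detailed balance at level $n-1$, which pins down $x_n:=1-\sum_{j=0}^{n-1}x_j$ to fit the same formula. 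The normalization $\sum_{j=0}^n x_j=1$ then determines $x_0$ as a function of $y_2$, namely $x_0 = 1/\bigl(1 + \sum_{j=1}^n \frac{1}{j!}\frac{k_0\cdots k_{j-1}}{\gamma_1\cdots\gamma_j} y_2^j\bigr)$.

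Next I would exploit the detailed balance to simplify the $y_2'=0$ equation. Summing the detailed balance identities $k_j x_j y_2 = (j+1)\gamma_{j+1} x_{j+1}$ over $0\le j\le n-1$ shows that the big bracket multiplied by $\theta$ in the $\varepsilon_2 y_2'$ equation is identically zero at steady state, so this equation reduces cleanly to $y_2 = y_1^2$. The equations $y_1'=0$ and $z'=0$ then give $z = y_1$ and $z = r_0 x_0(y_2) = r_0 x_0(y_1^2)$, so the whole steady-state problem collapses to the single scalar equation
\[
	y_1\Bigl(1 + \sum_{j=1}^n \tfrac{1}{j!}\tfrac{k_0\cdots k_{j-1}}{\gamma_1\cdots\gamma_j}\, y_1^{2j}\Bigr) = r_0.
\]

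Finally, the left-hand side is a strictly increasing continuous function of $y_1$ on $[0,\infty)$ taking all nonnegative values, so this equation has a unique nonnegative solution. By the dimensionless identity~\eqref{rown_r0} (which is exactly the defining relation for the scaling constant $q$), $y_1 = 1$ solves it, so $y_1=1$ is the unique solution. Back-substitution gives $y_2=1$, $z=1$, $x_0=1/r_0$, and the stated formulas for $x_j$. No step poses real difficulty; the only mildly subtle point is noticing that the detailed balance obtained from the $x_j'=0$ equations makes the $\theta$-term in the $y_2'$ equation cancel, which is what allows the reduction to a one-dimensional monotone equation and thus the uniqueness.
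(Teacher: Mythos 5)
Your proposal is correct and follows essentially the same route as the paper: solve the linear promoter block for fixed $y_2$ (the detailed-balance recursion giving $x_j$ in terms of $x_0$ and $y_2$), observe that the $\theta$-bracket in the $y_2$ equation vanishes at steady state so that $\bar y_2=\bar y_1^2$ and $\bar z=\bar y_1=r_0\bar x_0$, reduce to a single monotone scalar equation for $\bar y_1$, and use the scaling identity~\eqref{rown_r0} to conclude $\bar y_1=1$. The only cosmetic difference is that you phrase the one-dimensional equation as increasing LHS $=r_0$ while the paper equates an increasing linear function with a decreasing one; the content is identical.
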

\begin{proof}
    Note that the first $n$ equations are linear with respect to $x_0, x_1, \dotsc, x_{n-1}$ if $y_2$ is fixed. Thus, looking for a steady state 
    $(\bar x_0, \bar x_1, \dotsc, \bar x_{n-1}, \bar y_1, \bar y_2, \bar z)$ of system~\eqref{ogolny:epsilony_red} we easily get 
    \begin{equation}\label{rown_na_xi}
      \bar x_j = \frac{1}{j}\cdot\frac{k_{j-1}}{\gamma_{j}} \bar y_2 \bar x_{j-1}
         =\frac{1}{j!}\cdot\frac{ k_0 k_1\cdots  k_{j-1}}{\gamma_1\gamma_2\cdots \gamma_j} \bar y_2^j \bar x_0 , \quad j=1,2,\dotsc,n,
    \end{equation}
   and $\bar x_n = 1 - (\bar x_0+\bar x_1 + \dotsb + \bar x_{n-1})$. 
    
    Now, it is easy to see that the following identity 
    \begin{equation}\label{stany_stac1}
      \bar x_0  = \frac{1}{1+\sum_{j=1}^{n}\frac{1}{j!}\frac{k_0\ldots k_{j-1}}{\gamma_1\ldots\gamma_j}\bar y_2^j}
    \end{equation}
    holds.
    From the last equation of~\eqref{ogolny:epsilony_red} we deduce that 
    \[
      \bar z = r_0 \bar x_0.
    \]
    The equation for $y_2$ implies that $\bar y_2 = \bar y_1^2$ (the expression
    that is multiplied by $\theta$ is equal to zero due to~\eqref{stany_stac1}) and therefore
    \[
      \bar y_1 = \bar z.
      \]
    Combing this we have 
    \begin{equation}\label{rown_na_y1}
      \bar y_1  = \frac{r_0}{1+\sum_{j=1}^{n}\frac{1}{j!}\frac{k_0\ldots k_{j-1}}{\gamma_1\ldots\gamma_j}\bar y_1^{2j}}.
    \end{equation}
    We see at once that equation~\eqref{rown_na_y1} has a unique solution since its right-hand side is a positive decreasing function of $y_1$, its left-hand side is a linear increasing function taking value 0 at $\bar y_1=0$. Due to~\eqref{rown_r0} it is immediate that $\bar y_1=1$ solves~\eqref{rown_na_y1} and then formulas~\eqref{ss:ogolny} follow.
\qed
\end{proof}

\subsection{Formal derivation of reduced models}
The classical Hes1 gene expression model proposed by Monk~\cite{monk03currbiol} is a system of only two differential equations that describes concentrations of Hes1 mRNA and Hes1 proteins. In this section we reduce model~\eqref{ogolny:epsilony_red} to the classical Hes1 model in the two consecutive steps by means of the quasi-stationary approximation. We consider that the complex system described by~\eqref{ogolny:epsilony_red} consists of three natural time scales that are connected with: production of mRNA and protein, dimer formation, dimer binding to and dissolving from the DNA promoter. The second and the third time scales are reflected by $\varepsilon_2$ and $\varepsilon_1$, respectively. Assuming that $\varepsilon_1, \varepsilon_2\ll 1$, the reduction can be done in three different ways 
depending on whether $\varepsilon_2\ll\varepsilon_1$ or $\varepsilon_1\ll\varepsilon_2$ or $\varepsilon_1\approx\varepsilon_2$. 
The last case is more complex and will be considered elsewhere.
    
In the first case we set $\varepsilon_2\approx 0$ and $\varepsilon_1>0$ obtaining the system in which the concentration of dimers is always in a stationary level. Then we put $\varepsilon_1\approx 0$ which reduces the system to the classical one. In the second case we first set $\varepsilon_1\approx 0$ and $\varepsilon_2>0$ obtaining system in which the probability that DNA is not blocked as a function of hes1 dimers concentration. Then we set $\varepsilon_2\approx 0$ obtaining again the classical system. These two approaches are schematically illustrated in Fig.~\ref{fig.diagram}.

\subsubsection{Derivation of a simplified model under the assumption that dimer dynamics is much faster than other ones.} 
We start from the reduction of the model \eqref{ogolny:epsilony_red} assuming that dynamics of dimer formation and dissociation is much faster than other processes. In this case $\varepsilon_2$ is very small, thus we consider $\varepsilon_2\to 0$. Now, we derive a simplified model. To this end, 
we set $\varepsilon_2\approx 0$ and we calculate $y_2$ in dependence on other variables obtaining 
\begin{equation}\label{y2stacj}
y_2 = \frac{y_1^2+\theta \sum_{j=1}^{n-1}j\gamma_jx_j+\theta n\gamma_n\Bigl(1-\sum_{j=0}^{n-1}x_j\Bigr)}
{1+\theta\sum_{j=0}^{n-1}k_jx_j}=:\varphi(x,y_1),
\end{equation} 
where $x=(x_0,\dotsc,x_{n-1})$.
The expression above describes the stationary concentration of Hes1 dimers when other quantities are given.
The rest of the equations of \eqref{ogolny:epsilony_red} are exactly as before, i.e. 
\begin{equation}\label{uproszcz:n+2}
\begin{split}
\varepsilon_1 x_0'&=\gamma_1x_1- k_0 x_0 \varphi(x,y_1),\\
\varepsilon_1 x_j'&= k_{j-1}x_{j-1}\varphi(x,y_1)+(j+1)\gamma_{j+1}x_{j+1}-\left( k_j\varphi(x,y_1)+j\gamma_j\right)x_j,\quad 1\leq j\leq n-2,\\
\varepsilon_1 x_{n-1}' & = k_{n-2}x_{n-2}\varphi(x,y_1) + n\gamma_{n}\left(1-\sum_{j=0}^{n-1}x_j\right) - \bigl(k_{n-1}\varphi(x,y_1)+(n-1)\gamma_{n-1}\bigr)x_{n-1},\\
y_1'&=k(\varphi(x,y_1)-y_1^2)+\delta_1\bigl(z-y_1\bigr),\\
z'&=\delta_2\bigl(r_0x_0- z\bigr),
\end{split}
\end{equation}
where $\varphi(x,y_1)$ is given by~\eqref{y2stacj}.

\subsubsection{Derivation of a simplified model under the assumption that dynamics of free and occupied sites is much faster than other ones.} \label{sec:1to2R}
In order to simplify the system~\eqref{ogolny:epsilony_red}, the first $n$-equations can be reduced to algebraic equations. 
Setting $\varepsilon_1\approx 0$ we obtain the system
\begin{equation}\label{ogolny:algebraic_bez1}
   \begin{split}
   &\gamma_1x_1- k_0 y_2x_0=0,\\
   &k_{j-1}y_2x_{j-1}+(j+1)\gamma_{j+1}x_{j+1}-\left( k_jy_2+j\gamma_j\right)x_j=0,\quad 1\leq j\leq n-2,\\
   &k_{n-2}x_{n-2}y_2 + n\gamma_{n}\left(1-\sum_{j=0}^{n-1}x_j\right) - \bigl(k_{n-1}y_2+(n-1)\gamma_{n-1}\bigr)x_{n-1}=0.
   \end{split}
\end{equation}   
Treating $y_2$ as a~parameter we solve~\eqref{ogolny:algebraic_bez1} obtaining
\begin{equation}\label{ogolny:wzor_y2}
\begin{split}
x_0 & = \psi(y_2),\\
x_j & =\frac{1}{j!}\frac{k_0\ldots k_{j-1}}{\gamma_1\ldots\gamma_j}\,y_2^j\,\psi(y_2), \quad 1\le j \le n-1,
\end{split}
\end{equation}
where
\begin{equation}\label{bart:wzor_psi_ogolny}
\psi(y_2)=\frac{1}{1+\sum_{j=1}^{n}\frac{1}{j!}\frac{k_0\ldots k_{j-1}}{\gamma_1\ldots\gamma_j}y_2^j}.
\end{equation}
Moreover, we see at once that 
\[
-\sum_{j=0}^{n-1}k_jx_jy_2+\sum_{j=1}^{n-1}j\gamma_jx_j + n\gamma_n\left(1- \sum_{j=0}^{n-1}x_j\right)=0,
\]
which is clear from~\eqref{ogolny:algebraic_bez1}.   
Finally, we get the following reduced system
\begin{equation}\label{uposzcz:n}
\begin{split}
y_1'&=k(y_2-y_1^2)+\delta_1\bigl(z-y_1\bigr),\\
\varepsilon_2 y_2'&=y_1^2-y_2,\\	
z'&= \delta_2\Bigl(r_0 \psi(y_2)-z\Bigr), 
\end{split}
\end{equation}
where the function $\psi$ is given by~\eqref{bart:wzor_psi_ogolny}.

\subsubsection{Reduction of system~\eqref{uproszcz:n+2} and~\eqref{uposzcz:n} into the classical Hes1 gene expression model}
Here, we derive the classical Hes1 gene expression model starting from \eqref{uproszcz:n+2} and \eqref{uposzcz:n}. We first consider~\eqref{uposzcz:n}.
We observe that the left-hand side of the second equation of \eqref{uposzcz:n} is multiplied by a~small parameter $\varepsilon_2,$
since the creation and dissociation of Hes1 dimers are much faster than translation and transcription processes.
We derive now, the dependence of $y_2$ on $y_1$. So, putting $\varepsilon_2\approx 0$ and using the second equation of~\eqref{uposzcz:n}, we get
\[
y_2 = y_1^2,
\]
which leads to the reduced system
\begin{equation}\label{zreduk}
\begin{split}
y_1'&=\delta_1(z-y_1),\\
z'&=\delta_2\Bigl(r_0\psi(y_1^2)-z\Bigr).
\end{split}
\end{equation}
On the other hand, putting $\varepsilon_1\approx 0$ in~\eqref{uproszcz:n+2} and proceeding as in Section~\ref{sec:1to2R}
we also arrive to system~\eqref{zreduk}.
Model~\eqref{zreduk} is well known in the literature and we only cite the stability result 
that can be found, for example, in~\cite{mbab12nonrwa}. We cite it here for clarity.
\begin{prop}
   If the function $\psi$ is strictly decreasing, 
   the positive steady state of the system~\eqref{zreduk} is locally asymptotically stable. 
\end{prop}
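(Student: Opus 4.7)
The plan is to linearise the two-dimensional system \eqref{zreduk} at the positive steady state and verify the Routh--Hurwitz conditions. By the same argument used in the proof of the preceding proposition, the positive steady state of \eqref{zreduk} must satisfy $\bar{z}=\bar{y}_1$ and $\bar{y}_1=r_0\psi(\bar{y}_1^2)$; in the chosen non-dimensional variables this forces $\bar{y}_1=\bar{z}=1$, and uniqueness follows because the right-hand side of this fixed-point equation is strictly decreasing in $\bar{y}_1$ while the left-hand side is strictly increasing.

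The next step is to compute the Jacobian of the right-hand side of \eqref{zreduk} at the steady state. Writing $f_1(y_1,z)=\delta_1(z-y_1)$ and $f_2(y_1,z)=\delta_2(r_0\psi(y_1^2)-z)$, I get
\[
J = \begin{pmatrix} -\delta_1 & \delta_1 \\ 2\delta_2 r_0\,\bar{y}_1\,\psi'(\bar{y}_1^2) & -\delta_2 \end{pmatrix}.
\]
Its trace equals $-\delta_1-\delta_2<0$, and its determinant equals
\[
\det J = \delta_1\delta_2\Bigl(1 - 2 r_0\,\bar{y}_1\,\psi'(\bar{y}_1^2)\Bigr).
\]

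The key observation is that under the assumption $\psi$ is strictly decreasing we have $\psi'(\bar{y}_1^2)<0$, and since $r_0>0$ and $\bar{y}_1>0$, the term $-2r_0\bar{y}_1\psi'(\bar{y}_1^2)$ is strictly positive. Hence $\det J>0$. Combined with $\operatorname{tr} J<0$, the Routh--Hurwitz criterion for a $2\times2$ matrix guarantees that both eigenvalues have strictly negative real parts, which yields local asymptotic stability of $(\bar{y}_1,\bar{z})$ by the Hartman--Grobman theorem.

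There is no real obstacle here: the argument is a straightforward eigenvalue computation. The only minor point to be careful about is that the monotonicity assumption on $\psi$ enters exactly at the right place, namely in ensuring the positivity of the determinant; differentiability of $\psi$ at $\bar{y}_1^2$ is automatic from the explicit form \eqref{bart:wzor_psi_ogolny}, so no extra regularity hypothesis is needed.
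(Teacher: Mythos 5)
Your proposal is correct: the trace--determinant (Routh--Hurwitz) computation for the linearisation of \eqref{zreduk} at $(\bar y_1,\bar z)=(1,1)$ is exactly the standard argument, and it matches the style of the stability proofs given elsewhere in the paper; the paper itself does not prove this proposition but only cites it from~\cite{mbab12nonrwa}, so your write-up simply supplies the omitted verification. Two cosmetic remarks: local asymptotic stability of a hyperbolic equilibrium with both eigenvalues in the open left half-plane follows already from the principle of linearised stability (Hartman--Grobman is more than is needed), and even at a point where $\psi'(\bar y_1^2)=0$ one still has $\det J=\delta_1\delta_2>0$, so the conclusion only requires $\psi'(\bar y_1^2)\le 0$, which your monotonicity-plus-smoothness hypothesis guarantees.
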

Of course, the function $\psi$ given by~\eqref{bart:wzor_psi_ogolny} is a~decreasing function. 
\begin{rem}
   In the classical Hes1 model (see~\cite{jensen03febslett}), the term $\psi(y_1^2)$ is replaced by the Hill function
   \begin{equation}\label{funkcjaHilla}
   \psi_h(y_1^2) \approx \frac{a^h}{a^h+y_1^h},
   \end{equation}
   where $h$ is the Hill coefficient.
\end{rem}

\section{Justification of quasi-stationary approximation using the Tikhonov theorem }\label{sec:justT}

\subsection{The Tikhonov theorem}
Assume that in a system of ordinary differential equations there exists a subsystem, which dynamics is much faster than others equations.
If such ``fast'' subsystem can be distinguished, then the naive thinking is that this fast subsystem is close to its stationary state so we can eliminate from one to several variables from the full system and replace it by some algebraic equations. However, it is clear that such approach need not always to be true --- imagine for example that the steady state of the fast subsystem is unstable. In this Section, we remind a mathematical theory that justify the approximation described above. It is based on the Tikhonov theorem of 1952, (see~\cite{BanasiakLachowicz, tikhonov}), which states that as small parameter $\varepsilon>0$ converges to zero, the solution of the full system approaches the solution of the degenerate (slow) system.
More precisely, the Tikhonov theorem implies that the solutions of full system can be approximated by the solutions of the reduced system. In order to make the paper clearer we cite here the Tikhonov theorem together with needed assumptions following~\cite{BanasiakLachowicz}.
We consider the following system of ODEs with one small parameter	
   \begin{equation}\label{Tikhonov}
		\begin{split}
			u'(t) & =  F(u,v),\quad  u(0)={u}_0,\\
			\varepsilon v'(t) & =  G(u,v),\quad  v(0)=v_0.
		\end{split}
\end{equation}
System~\eqref{Tikhonov} consist of two subsystems: equations for $u$ (which is usually called a slow system, and equations for $v$, which is called fast subsystem. 
To formulate the Tikhonov theorem we need the following assumptions:
\begin{enumerate}[({A}1)]
   \item $F\colon\Omega\to\R^n$ and $G\colon\Omega\to\R^m$ are continuous and satisfy the Lipschitz condition in $\Omega$, where $\Omega=\overline{U}\times V$ is a subset of $\R^{n+m}$, where $\overline{U}$ is a compact set in $R^n$ and $V$ is a bounded open set in $R^m$,
   \item for any $u\in\overline{U}$ there exists an isolated solution $v=\phi(u)\in V$ of the algebraic equation $G(u,v)=0$ and $\phi$ is continuous,
   \item for any $u\in\overline{U}$ treated as a parameter the solution of the initial layer equation	$v'(t)=G(u,v)$
	is asymptotically stable (uniformly with respect to the $u$),
   \item the function $u\mapsto F(u,\phi(u))$ satisfies the Lipschitz condition with respect to $u$ in $\overline{U}$
      and there exists a unique solution $\bar u(t)$ of the reduced system 
      \begin{equation}\label{Tikhonov_red}
         u'(t)=F(u,\phi(u)),\quad u(0)={u}_0
      \end{equation}
      such that $\bar u(t)\in \Int\,\overline{U}$ for all $t\in (0,T),$
   \item $v_0$ belongs to the region of attraction of the point $\phi(u_0),$ where $G(u_0,\phi(u_0))=0,$ i.e. the solution $\hat v=\hat{v}(t)$ of the initial problem
      \[
         v'(t)=G(u_0,v),\quad v(0)=v_0
      \]
      satisfies $\lim_{t\to\infty}\hat{v}(t)=\phi(u_0).$
\end{enumerate}

\begin{thm} [Tikhonov] \label{thm:Tikhonov}
Let $T>0$ be an arbitrary number.
Under the assumptions {\upshape(A1)}--{\upshape(A5)} there exists $\varepsilon_0 >0,$ such that
for any $\varepsilon\in(0,\varepsilon_0]$ there exists a unique solution $(u_\varepsilon(t),v_\varepsilon(t))$ of the full system~\eqref{Tikhonov} on $[0,T]$ and
\begin{align*}
\lim_{\varepsilon\to 0} u_\varepsilon(t)&= \bar u(t),\quad t\in [0,T],\\
\lim_{\varepsilon\to 0} v_\varepsilon(t)&= \bar v(t),\quad t\in (0,T],
\end{align*}
where $\bar{u}(t)$ is the solution of the reduced problem~\eqref{Tikhonov_red} and $\bar v(t)=\phi(\bar u(t))$.
 \end{thm}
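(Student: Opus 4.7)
The plan is to decompose the time interval $[0,T]$ into an inner boundary layer near $t=0$, where the fast variable $v_\varepsilon$ relaxes from $v_0$ toward the slow manifold $\{v=\phi(u)\}$, and an outer region on which the trajectory follows that manifold. The boundary layer should have length $O(\varepsilon)$ in the original time scale, which explains why convergence of $v_\varepsilon$ is claimed only on $(0,T]$, while $u_\varepsilon$ (being slow) converges uniformly on all of $[0,T]$. Preliminarily, I would invoke Picard--Lindel\"of, justified by (A1), to obtain a unique local solution $(u_\varepsilon,v_\varepsilon)$ of~\eqref{Tikhonov}, and then use compactness of $\overline U$ together with the fact that $V$ is open and $\phi$ continuous to argue that for $\varepsilon$ small enough the trajectory does not leave $\Omega$ before time $T$.

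For the inner analysis, I would introduce the fast time $\tau=t/\varepsilon$, converting the fast equation into $\tfrac{dv}{d\tau}=G(u_\varepsilon(\varepsilon\tau),v)$. On any bounded $\tau$-interval, $u_\varepsilon(\varepsilon\tau)$ is close to $u_0$ uniformly as $\varepsilon\to 0$, so the dynamics is a small Lipschitz perturbation of the frozen system $\tfrac{dv}{d\tau}=G(u_0,v)$ whose equilibrium $\phi(u_0)$ attracts $v_0$ by (A5). A continuous-dependence estimate then shows that, for any neighborhood $\mathcal{N}$ of $\phi(u_0)$, one can pick $\tau^{\ast}$ and $\varepsilon_0$ so that $v_\varepsilon(\varepsilon\tau^{\ast})\in\mathcal{N}$ for all $\varepsilon\in(0,\varepsilon_0]$; thus after an $O(\varepsilon)$ transient $v_\varepsilon$ reaches a neighborhood of $\phi(u_\varepsilon(t))$.

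In the outer region I would exploit (A3): the uniform asymptotic stability of $\phi(u)$ for the frozen fast system gives a Lyapunov function, continuous in $u$, from which a standard perturbation argument (treating $u_\varepsilon(t)$ as a slow, Lipschitz parametric variation with $\|\dot u_\varepsilon\|$ bounded by (A1)) shows that $\|v_\varepsilon(t)-\phi(u_\varepsilon(t))\|$ stays of order $\varepsilon$ for $t\ge\varepsilon\tau^{\ast}$. Substituting $v_\varepsilon=\phi(u_\varepsilon)+O(\varepsilon)$ into the slow equation and writing $u'_\varepsilon=F(u_\varepsilon,\phi(u_\varepsilon))+O(\varepsilon)$ against $\bar u'=F(\bar u,\phi(\bar u))$, the Lipschitz condition from (A4) combined with Gronwall's inequality gives $\sup_{t\in[0,T]}\|u_\varepsilon(t)-\bar u(t)\|\to 0$ as $\varepsilon\to 0$. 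Composing with the continuity of $\phi$ and with the above estimate on $v_\varepsilon-\phi(u_\varepsilon)$ then yields $v_\varepsilon(t)\to\bar v(t)=\phi(\bar u(t))$ for every $t\in(0,T]$.

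The main obstacle is the matching of the inner and outer expansions in a fully quantitative way: one must show that the transient length $\varepsilon\tau^{\ast}$ can be chosen independently of the specific trajectory and of $\varepsilon$, and that the Lyapunov/stability estimate from (A3) is uniform in $u\in\overline U$ so that the slow drift of $u_\varepsilon$ through the parameter space does not ruin the contraction. A secondary technicality is to ensure, via the condition $\bar u(t)\in\Int\,\overline U$ in (A4), that the perturbed trajectory $u_\varepsilon$ also stays inside $\overline U$, so that all the hypotheses continue to apply on the whole of $[0,T]$; this is a standard continuation argument once the Gronwall bound is in place.
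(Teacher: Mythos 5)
There is nothing in the paper to compare your argument against: Theorem~\ref{thm:Tikhonov} is stated as a citation (following \cite{BanasiakLachowicz}, going back to \cite{tikhonov}) and the authors deliberately give no proof, since the result is classical and is only used as a tool in Section~\ref{sec:justT}. Your outline is essentially the standard textbook proof of Tikhonov's theorem: local existence by Picard--Lindel\"of under (A1), a boundary-layer analysis in the fast time $\tau=t/\varepsilon$ using the attraction hypothesis (A5) near $t=0$, a Lyapunov-type stability estimate on the outer region based on the uniform asymptotic stability (A3) of the quasi-steady state $\phi(u)$, and a Gronwall comparison of $u_\varepsilon$ with the solution $\bar u$ of the reduced problem~\eqref{Tikhonov_red}, with the interior condition $\bar u(t)\in\Int\overline U$ from (A4) guaranteeing that the perturbed slow trajectory stays in the domain where all hypotheses hold. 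This is the right route, and you correctly identify the two delicate points (uniformity in $u$ of the stability estimate, and the matching of inner and outer regions).

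One technical overstatement to fix if you were to carry this out in full: you claim that $\|v_\varepsilon(t)-\phi(u_\varepsilon(t))\|$ stays of order $\varepsilon$ for $t\ge\varepsilon\tau^\ast$. Under (A1)--(A5) alone --- $G$ merely continuous and Lipschitz, $\phi$ merely continuous, and (A3) asserting asymptotic (not exponential) stability --- such an $O(\varepsilon)$ rate is not available; quantitative rates of this kind belong to the Vasil'eva/Hoppensteadt refinements, which require additional smoothness. What the argument actually gives, via a converse-Lyapunov (or uniform-attraction) construction, is that for every $\eta>0$ one can choose $\varepsilon_0$ so that $\|v_\varepsilon(t)-\phi(u_\varepsilon(t))\|<\eta$ on $[\varepsilon\tau^\ast,T]$ for all $\varepsilon\le\varepsilon_0$; this smallness, not an $O(\varepsilon)$ bound, is what you feed into Gronwall's inequality for the slow equation, and it suffices for the stated limits, since the convergence of $v_\varepsilon$ is only claimed pointwise on $(0,T]$ and that of $u_\varepsilon$ uniformly on $[0,T]$.
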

The Tikhonov theorem gives the conditions under which the solution $(u_\varepsilon(t),v_\varepsilon(t))$ to 
system~\eqref{Tikhonov} converges to $(\bar u(t),\bar v(t))$, where 
\begin{itemize}
   \item $\bar v$ is the solution of the algebraic equation $0=G(u,v),$
   \item $\bar u$ is the solution to~\eqref{Tikhonov_red} obtained from the first equation of system~\eqref{Tikhonov} by substituting a known quasi-stationary solution $\bar v$ instead of $v$.  
\end{itemize}

Now, using the Tikhonov theorem we prove that for $\varepsilon_1$ small enough solution to 
system~\eqref{uposzcz:n} (the model with dimers) approximates solution to system~\eqref{ogolny:epsilony_red} (the full model), and solution to~\eqref{zreduk} (the classical Hes1 model) approximates solutions to~\eqref{uproszcz:n+2} (the model without dimers), see Fig.~\ref{fig.diagram}. We also show that for $\varepsilon_2$ small enough solution to~\eqref{uproszcz:n+2} (the model without dimers) approximates solution to system~\eqref{ogolny:epsilony_red} (the full model), and solution to~\eqref{zreduk} (the classical Hes1 model) approximates solutions to~\eqref{uposzcz:n} (the model with dimers), see the upper left and lower right arrows at Fig.~\ref{fig.diagram}.

To this end we need to introduce some notation. Let
\begin{equation*}
\begin{split}
\Omega_x&=\Bigl\{ (x_0,\ldots,x_{n-1})\in\R^{n} : 0\le x_j,\, \sum_{j=0}^{n-1}x_j\le 1,\, 0\le j\le n-1\Bigr\},\\
\Omega_{y_2}&=\Bigl\{ y_2\in\R: y_2\le \bar y_2\Bigr\},\\
\Omega_w&=\Bigl\{ (y_1,z)\in  \R^{2} : 0\le y_1\le \bar y_1,\, 0\le z\le r_0\Bigr\},
\end{split}
\end{equation*}
where $\bar y_1$ and $\bar y_2$ are given by formula~\eqref{ogr}.
We rewrite the full system~\eqref{ogolny:epsilony_red} in the following way
\begin{equation}
\begin{split}\label{nasz}
\varepsilon_1 x'&=f(x,y_2),\\
\varepsilon_2 y_2'&=g(x,y_2,w),\\
w'&=h(x,y_2,w),
\end{split}
\end{equation}
with initial condition
\begin{equation}\label{nasz_pocz}
x(0)=\mathring{x},\quad
y_2(0)=\mathring{y}_2,\quad
w(0)=\mathring{w},
\end{equation}
where $x=(x_0,x_1,\dotsc,x_{n-1})$, $w=(y_1,z),$ $\mathring{w}=(\mathring{y}_1,\mathring{z})$ and the functions $f,$ $g,$ $h$ are given by the following forms
\begin{equation}\label{function_f}
f(x,y_2)= \left[ \begin{array}{ccc}
f_0(x,y_2)\\
...\\
f_j(x,y_2)\\
...\\
f_{n-1}(x,y_2)
\end{array} \right]=
\left[ \begin{array}{ccc}
\gamma_1x_1- k_0 x_0 y_2\\
...\\
k_{j-1}x_{j-1}y_2+(j+1)\gamma_{j+1}x_{j+1}-\left( k_jy_2+j\gamma_j\right)x_j\\
...\\
k_{n-2}x_{n-2}y_2 + n\gamma_{n}\left(1-\sum_{j=0}^{n-1}x_j\right) - \bigl(k_{n-1}y_2+(n-1)\gamma_{n-1}\bigr)x_{n-1}
\end{array} \right],
\end{equation}
\begin{equation}\label{function_g}
g(x,y_2,w)= \theta\left(	-\sum_{j=0}^{n-1} k_jx_jy_2+ \sum_{j=1}^{n-1}j\gamma_jx_j+n\gamma_n\biggl(1- \sum_{j=0}^{n-1}x_j\biggr)\right)-y_2+y_1^2
\end{equation}
and
\begin{equation}\label{function_h}
h(x,y_2,w)= \left[ \begin{array}{ccc}
k(y_2-y_1^2)+\delta_1\bigl(z-y_1\bigr)\\
\delta_2\bigl(r_0 x_0- z\bigr)
\end{array} \right].
\end{equation}

\begin{rem}
   We observe that the functions $f:\Omega_x\times\Omega_{y_2}\to \R$ and $g,h:\Omega_x\times\Omega_{y_2}\times\Omega_w\to \R$
   given by~\eqref{function_f}--\eqref{function_h} are smooth.
\end{rem}

\begin{rem}\label{trapping}
   Recall that the set $\Omega=\Omega_x\times\Omega_{y_2}\times\Omega_w$ is invariant (see Theorem~\ref{invariant}). In fact, it is even a trapping region, i.e. the solutions are contained in $\Int\Omega$ for $t>0,$ because the respective inequalities in the proof of the invariance of $\Omega$ become strict for $t>0$ (the vector field points inward everywhere on the boundary of $\Omega$).
For example, if $x_0=0$ we have $x_0'=\frac{\gamma_1}{\varepsilon_1}>0$ and for $x_0=1$ we deduce $x_0'=-\frac{y_2}{\varepsilon_1}<0$. 
	It means that the vector field is pointing to the left, so trajectories cannot leave the interior of the domain. Similar arguments apply to the variables $y_1$ and $z.$ 
	Namely, for $y_1=0,$ $y_1'=ky_2+\delta_1z>0$ for $y_1=0$ but for $y_1=\bar y_1,$ where $\bar y_1=r_0+\frac{k}{\delta_1}\theta_\gamma,$ since $\bar y_1>1,$ the following inequality is satisfied:
      \begin{align*}
      y_1'(\bar y_1)&=\frac{k\theta}{1+\theta x_0}\left(\gamma_1(1-x_0)-x_0\bar y_1^2\right)+\delta_1\bigl(z-\bar y_1\bigr)\\
      &< \frac{k\theta}{1+\theta x_0}\left(\gamma_1(1-x_0)-x_0\bar y_1\right)+\delta_1\bigl(z-\bar y_1\bigr).
      \end{align*}
      Then, by $z\leq r_0,$ $y_1'(\bar y_1)<0.$
	Moreover, for $z=0$ we have $z'=\delta_2r_0x_0>0,$ while $z=r_0$ we get $z'=\delta_2r_0(x_0-1)<0.$
	Combining these we deduce that the solution $(\bar x_0,\bar w )$ belongs to $\Int \Omega$.
\end{rem}

Now we prove two results on the global stability of the reduced form of~\eqref{nasz} (i.e. with or without dimers). First, we consider the model which describes changes of concentration of Hes1 dimers.
\begin{prop}\label{stab_g} 
   Let $g$ be the function defined by formula~\eqref{function_g}. For any fixed $x\in \Omega_x$ and $w\in \Omega_w$ there exists exactly one 
   non-negative steady state of equation  $\varepsilon_2 y_2'=g(x,y_2,w)$ which is globally asymptotically stable in $\Omega_{y_2}$ (uniformly with respect to $(x,w)=(x_0,\ldots,x_{n-1},y_1,z)$).
\end{prop}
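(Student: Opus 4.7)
The plan is to exploit the fact that $g(x,\cdot,w)$ is an affine function of $y_2$. Collecting the terms in~\eqref{function_g}, one obtains $g(x,y_2,w) = -\alpha(x)\,y_2 + \beta(x,w)$, where
\[
   \alpha(x) = 1 + \theta\sum_{j=0}^{n-1}k_j x_j, \qquad \beta(x,w) = y_1^2 + \theta\sum_{j=1}^{n-1}j\gamma_j x_j + \theta n\gamma_n\Bigl(1-\sum_{j=0}^{n-1}x_j\Bigr).
\]
The crucial observation is that $\alpha(x) \geq 1$ for every $x\in\Omega_x$, while $\beta(x,w) \geq 0$ on $\Omega_x\times\Omega_w$. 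This uniform lower bound on $\alpha$ is what drives the whole argument.

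From this decomposition I would immediately read off existence and uniqueness of the non-negative steady state: since $\alpha(x) > 0$, the equation $g(x,y_2,w)=0$ has the unique solution
\[
   y_2^{*}(x,w) = \beta(x,w)/\alpha(x) = \varphi(x,y_1),
\]
which coincides with the function defined in~\eqref{y2stacj}; non-negativity is immediate from $\beta\geq 0$. To confirm $y_2^{*}\in\Omega_{y_2}$ I would combine $\alpha\geq 1$ with $\beta(x,w)\leq \bar y_1^2 + \theta_\gamma = \bar y_2$, which holds by Theorem~\ref{invariant} (giving $y_1\leq \bar y_1$) and by the definition $\theta_\gamma = \theta\sum_{j=1}^{n} j\gamma_j$ from~\eqref{ogr}, since the remaining two terms in $\beta$ together do not exceed $\theta\sum_{j=1}^{n}j\gamma_j$. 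Invariance of $\Omega_{y_2}$ then follows from the sign of $g$ at the boundary: $g(x,0,w) = \beta\geq 0$ and $g(x,\bar y_2,w)\leq 0$.

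Finally, for global asymptotic stability in $\Omega_{y_2}$ uniformly in $(x,w)$, I would just solve the linear ODE explicitly:
\[
   y_2(t) - y_2^{*} = \bigl(y_2(0) - y_2^{*}\bigr)\,\exp\!\Bigl(-\tfrac{\alpha(x)}{\varepsilon_2}\,t\Bigr).
\]
Since $\alpha(x) \geq 1$ uniformly in $x\in\Omega_x$, the exponential decay rate $\alpha(x)/\varepsilon_2$ is bounded below by $1/\varepsilon_2$ independently of $(x,w)$, so the convergence $y_2(t) \to y_2^{*}$ is uniform in the frozen parameters. This yields precisely what is needed to verify assumption~(A3) of the Tikhonov theorem in the subsequent application. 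There is no genuine obstacle here: the proposition reduces to elementary one-dimensional linear theory the moment one observes that $g$ is affine in $y_2$ with a coefficient uniformly bounded away from zero.
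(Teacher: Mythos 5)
Your proof is correct and follows essentially the same route as the paper: you rewrite $g$ as an affine function of $y_2$ with coefficient $-\alpha(x)=-\bigl(1+\theta\sum_{j=0}^{n-1}k_jx_j\bigr)$, note that $\alpha(x)\ge 1$ uniformly on $\Omega_x$, and conclude existence, uniqueness and uniform global asymptotic stability of the steady state from one-dimensional linear theory. The explicit exponential solution and the check that $y_2^{*}\le\bar y_2$ are slightly more detailed than the paper's argument but add nothing essentially different.
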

\begin{proof}
   Note that we can rewrite the function $g$ in the following way
   \[
      g(x,y_2,w)=g(x_0,x_n,\dotsc,x_{n-1},y_2,y_1,z) = -\left(1+\theta\sum_{j=0}^{n-1} k_jx_j\right)y_2+
      \theta\left(\sum_{j=1}^{n-1}j\gamma_jx_j+n\gamma_n\biggl(1- \sum_{j=0}^{n-1}x_j\biggr)\right)  +y_1^2.
   \]      
Clearly, the equation $\varepsilon_2 y_2'=g(x,y_2,w)$ is linear with respect to $y_2$, as we consider fixed $x$ and $w=(y_1,z)$. Moreover, for any
   $x\in \Omega_x$ we have $x_0+x_1+\dotsc+x_{n-1}\le 1$, and $x_j\ge 0$, thus the equation $\varepsilon_2 y_2'=g(x,y_2,w)$
	has exactly one positive steady state which is globally asymptotically stable in $\Omega_{y_2}$, uniformly with respect to $(x,w)=(x_0,\ldots,x_{n-1},y_1,z)$ as the coefficient by $y_2$, that is $-1-\theta\sum_{j=0}^{n-1} k_jx_j$, is negative and separated from zero for all $x\in\Omega_x$.
	\qed
	\end{proof}

Next, we show stability of the system that describes the dynamics of number of free and occupied binding sites for the fixed concentration of dimers.
\begin{prop}\label{stab_f} 
Let $f$ be the function defined by formula~\eqref{function_f}. For any fixed $y_2\in \Omega_{y_2}$ there exists exactly one 
   non-negative steady state of equation $\varepsilon_1 x'=f(x,y_2)$ which is globally asymptotically stable in $\Omega_{x}\times\Omega_w$ (uniformly with respect to $y_2$).
\end{prop}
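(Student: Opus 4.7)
The plan is to exploit the affine dependence of $f$ on $x$: for each fixed $y_2\in\Omega_{y_2}$ the equation reads $\varepsilon_1 x' = A(y_2)x+c(y_2)$ with $A(y_2)$ an $n\times n$ matrix smooth in $y_2$ and $c(y_2)=(0,\ldots,0,n\gamma_n)^T$. Consequently, existence and uniqueness of a steady state reduce to invertibility of $A(y_2)$, and global asymptotic stability reduces to $A(y_2)$ being Hurwitz uniformly in $y_2$.

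The steady state is obtained by the same recursion used in the proof of the existence of the steady state of the full system: solving $A(y_2)\bar x+c(y_2)=0$ coordinate by coordinate yields
\[
\bar x_j(y_2)=\frac{1}{j!}\frac{k_0\ldots k_{j-1}}{\gamma_1\ldots\gamma_j}y_2^j\,\psi(y_2),\quad 0\leq j\leq n-1,
\]
with $\psi$ as in~\eqref{bart:wzor_psi_ogolny}. One checks $\bar x_j\geq 0$ and $\sum_{j=0}^{n-1}\bar x_j\leq 1$, so $\bar x(y_2)\in\Omega_x$.

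The technical crux is showing $A(y_2)$ is Hurwitz. I would augment the state by the redundant variable $x_n:=1-\sum_{j=0}^{n-1}x_j$, obtaining the $(n+1)$-dimensional tridiagonal linear system $\varepsilon_1 X'=Q(y_2)X$ on $\R^{n+1}$, where $Q(y_2)$ carries the birth rates $k_j y_2$ on the superdiagonal and the death rates $j\gamma_j$ on the subdiagonal, and has vanishing column sums. The latter reflect the conservation law $\sum_j X_j=\mathrm{const}$ and imply that $0$ is an eigenvalue of $Q(y_2)$ with left eigenvector $(1,\ldots,1)$. The affine chart $(x_0,\ldots,x_{n-1})\mapsto(x_0,\ldots,x_{n-1},1-\sum_j x_j)$ conjugates the reduced system with the restriction of $\varepsilon_1 X'=Q(y_2)X$ to the invariant hyperplane $\{\sum_j X_j=1\}$, so the spectrum of $A(y_2)$ coincides with the set of nonzero eigenvalues of $Q(y_2)$. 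For $y_2>0$ the matrix $Q(y_2)$ is the generator of an irreducible birth-death chain satisfying the detailed balance relation $\bar x_j k_j y_2=\bar x_{j+1}(j+1)\gamma_{j+1}$, so $D^{-1/2}Q(y_2)D^{1/2}$ with $D=\mathrm{diag}(\bar x_j(y_2))$ is symmetric, and its nonzero eigenvalues are real and strictly negative. The boundary case $y_2=0$ has to be handled separately: there $Q(0)$ is upper triangular with diagonal $0,-\gamma_1,-2\gamma_2,\ldots,-n\gamma_n$, giving the same conclusion directly.

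Uniformity in $y_2$ is then an immediate compactness argument: the entries of $A(y_2)$ are polynomial in $y_2\in[0,\bar y_2]$, so the map $y_2\mapsto\max_i\Re\lambda_i(A(y_2))$ is continuous and strictly negative on a compact interval, hence bounded away from zero. The main obstacle I expect is the spectral analysis of $Q(y_2)$; the detailed-balance symmetrization above seems cleanest, but one could alternatively build a quadratic Lyapunov function $V(x)=\sum_j(x_j-\bar x_j)^2/\bar x_j$ adapted to the reversibility of the chain and deduce decay from the associated Dirichlet form.
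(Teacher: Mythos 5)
Your argument is correct, and its core is the same device the paper uses: append the redundant coordinate $x_n=1-\sum_{j=0}^{n-1}x_j$, observe that the augmented system is linear with a tridiagonal birth--death generator, and symmetrize that matrix by a diagonal similarity (your detailed-balance weights $\mathrm{diag}(\bar x_j)$ are, up to normalization, exactly the paper's matrix $D_n$ with $\delta_j^2=\frac{k_0\cdots k_{j-2}}{\widetilde\gamma_1\cdots\widetilde\gamma_{j-1}}y_2^{j-1}$), so that all eigenvalues are real. Where you diverge is the finishing step: the paper counts eigenvalues above $0$ via the Sturm sequence property, computing by induction that the leading principal minors satisfy $\Delta_j(0)=(-1)^{j}k_0\cdots k_{j-1}y_2^{j}$ and hence alternate strictly in sign, which forces the top eigenvalue to be $0$ and the rest negative; you instead invoke the Markov-chain structure (zero column sums, left eigenvector $\mathbf{1}$, reversibility giving negative semidefiniteness via the Dirichlet form, irreducibility giving simplicity of the zero eigenvalue) and identify the spectrum of the reduced matrix $A(y_2)$ with the nonzero spectrum of the generator through the invariant hyperplane $\{\sum_j X_j=1\}$. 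Each has its merits: the Sturm-sequence computation is elementary and fully explicit, but it degenerates at $y_2=0$ (the alternation $\Delta_j(0)\propto y_2^{j}$ collapses), which is why the paper secures uniformity only after shrinking to a set where $y_2$ is separated from zero; your route treats $y_2=0$ separately by triangularity and then gets uniformity over the whole compact interval $[0,\bar y_2]$ from continuity of the spectral abscissa, which is arguably cleaner and slightly stronger. Two cosmetic points: with your convention of vanishing column sums and left eigenvector $(1,\dots,1)$, the birth rates $k_jy_2$ sit on the subdiagonal and the exit rates $j\gamma_j$ on the superdiagonal (you state the opposite placement), and the terse claim that the nonzero eigenvalues are strictly negative deserves the one-line justification you hint at (Gershgorin or the Dirichlet form for semidefiniteness, irreducibility for simplicity of $0$); neither affects the validity of the argument.
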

\begin{proof}
We note that the function $f$ is linear with respect to $x$, when $y_2$ is fixed. Thus, in order to determine the stability of the steady state it is enough to 
study the Jacobi matrix of the right hand side. Moreover, a unique steady state exists if and only if this matrix is non-singular. The existence of the non-negative steady state was already proved in Subsection~\ref{sec:1to2R}. We prove that all eigenvalues of the matrix of the right hand side of $\varepsilon_1 x'=f(x,y_2)$ are real and negative. To this end, we introduce additional variable and show that after the modification, the matrix becomes tridiagonal, and therefore, we use properties of tridiagonal matrices. Let us consider 
\[
   x_n = 1 - \bigl(x_0+x_1+x_2+\dots +x_{n-1}\bigr),
\]
and let $\hat x = (x,x_n) = (x_0,x_1,\dotsc,x_{n-1},x_n)$. The variable $\hat x$ fulfils the following linear ODE $\hat x' = A\hat x+b$, where  
\[
A=
\begin{bmatrix}
- k_0 y_2  & \widetilde\gamma_1         & 0  & 0                & \cdots                     & 0\\
k_0 y_2   & -k_1y_2-\widetilde\gamma_1 & \widetilde\gamma_2 &    0                  &  & & \\
0          & k_1y_2                     & -k_2y_2-\widetilde\gamma_2 & \widetilde\gamma_3 &  & \vdots\\
&  & \ddots &\ddots &\ddots& \\
\vdots     &                         &          &                 k_{n-2}\gamma_{n-1} &  -k_{n-1}y_2-\widetilde\gamma_{n-1}   & k_{n-1}y_2\\
0          &                          &   \cdots     &                    &  k_{n-1}y_2   & -\widetilde\gamma_n
\end{bmatrix},
\]
$\widetilde\gamma_1=j\gamma_j$, and the form of the vector $b$ is not important here. 
Now, the matrix $A$ is tridiagonal. We show that the eigenvalues of $A$ are real and the largest of them is equal to $0$. Because the studied system is linear (with respect to $y_2$) and $x_0+x_1+\dotsb+x_{n-1}+x_n=1$ is an invariant subspace for the system $\hat x' = A\hat x+b$ we deduce that the steady state of $\varepsilon_1 x'=f(x,y_2)$ is stable.

We observe that the tridiagonal matrix $A$ is similar to the following symmetric matrix
\[
P =
\begin{bmatrix}
\alpha_1  & \beta_1         & 0  & 0                & \cdots                     & 0\\
\beta_1  & \alpha_2 &   \beta_2                 &0  & \cdots & 0\\
0          & \beta_2                 & \alpha_3 & \beta_3 & \cdots & 0\\
\vdots&  \vdots & \ddots &\ddots &\ddots& \vdots\\
0     &   0                      &   \cdots       &                 \beta_{n-1}& \alpha_{n}  & \beta_{n}\\
0          &     0                     &   \cdots     &                 0   & \beta_{n} & \alpha_{n+1}
\end{bmatrix},
\]
where $\alpha_j=-k_{j-1}y_2-\widetilde\gamma_{j-1}$ are terms on the diagonal for $1\leq j\leq n+1$ and $\beta_j=\sqrt{k_{j-1}\widetilde\gamma_j y_2}$ for $1\leq j\leq n$ are terms on the super- and sub-diagonal. 
To shorten the notation, we set $k_n=0$ and $\widetilde\gamma_0=0$.
This matrix $P$ has a form $D_n^{-1}AD_n,$ where $D_n$ is a diagonal matrix with diagonal elements 
\[ 
\delta_1=1,\qquad \delta_j^2=\frac{k_0\ldots k_{j-2}}{\widetilde\gamma_1\ldots \widetilde\gamma_{j-1}}\,y_2^{j-1},\quad
j=2,\ldots n.
\]
The characteristic polynomial $\Delta_n(\lambda)= \det(P-\lambda I_n)$ can be computed by the following recurrence relations
\[
\Delta_0(\lambda)=1,\quad\Delta_1(\lambda)=\alpha_1-\lambda,\quad\Delta_j(\lambda)=(\alpha_j-\lambda)\Delta_{j-1}(\lambda)-\beta_{j-1}^2\Delta_{j-2}(\lambda)\quad \text{for}\quad j\geq 2.
\]
Since similar matrices have the same eigenvalues, it is enough to examine that the eigenvalues of $P$ are real and the largest of it is equal to $0$. Observe that all eigenvalues of $P$ are real and simple, because the matrix $P$ is irreducible, tridiagonal and symmetric
(see~\cite[Prop.~10.1.2]{Serre}).
Moreover, the general theory of tridiagonal matrices implies also (see~\cite[Thm~5.9 (The Sturm sequence property)]{Suli}) that the number of eigenvalues grater than some real number $a$ is equal to the number agreements of sign between consecutive members of the sign sequence $\left\{\Delta_0(a),\Delta_1(a),\ldots \Delta_n(a) \right\}$. It is clear that $\lambda=0$ is an eigenvalues of the matrix $A$ and thus it is an eigenvalue of the matrix $P$. 
Now, we prove that consecutive members of the sequence $\left\{\Delta_0(0),\Delta_1(0),\ldots \Delta_n(0) \right\}$ have different signs. To this end, we prove by mathematical induction on $n$ that 
\[
   \Delta_0(0)=1,\qquad \Delta_n(0)=(-1)^{n} k_0\ldots k_{n-1}y_2^n \quad {\text{for}}\quad n\geq 1.
\]
If $n=1$ this statement is obviously true, as $\Delta_1(0)=-k_0y_2.$ Now, we show that if the formula is correct for $\Delta_n(0)$ it is also correct for $\Delta_{n+1}(0)$. A direct calculation shows that $\Delta_2(0) = k_0k_1y_2^2$,  and
\[\Delta_{n+1}=\alpha_{n+1}\Delta_n-\beta_n^2\Delta_{n-1}
=(-k_ny_2-\tilde\gamma_n)(-1)^nk_0\ldots k_{n-1}y_2^n-k_{n-1}\tilde\gamma_ny_2(-1)^{n-1}k_0\ldots k_{n-2}y_2^{n-1}
= (-1)^{n+1} k_0\ldots k_{n}y_2^{n+1}.
\]
This equality ensures that successive terms of the sequence $\bigl(\Delta_{j}(0)\bigr)_{j=0,..,n}$ change their sign. 
Thus, the matrix $A$ has no eigenvalue grater than 0, so the system $(x_0,x_1,\dots,x_n)$ is stable (for fixed $y_2$ the system is linear).
In addition, as the system is linear the stability is uniform. For this purpose, it is enough to take a diminished set $\hat{\Omega},$ where $y_2$ is separated from zero (see Fig.~\ref{fig.izo}).
\qed 
\end{proof}

As mentioned before, there are two different ways to reduce system~\eqref{ogolny:epsilony_red} (or equivalently~\eqref{nasz}) depending on the time scales,
that is whether $\varepsilon_2\ll \varepsilon_1 \ll 1$ (the left part of Fig.~\ref{fig.diagram}) or $\varepsilon_1\ll \varepsilon_2 \ll 1$ (the right part of Fig.~\ref{fig.diagram}).
For better readability, we present a diagram (Fig.~\ref{fig.diagram2}) for our system~\eqref{nasz}.
{\small{\begin{figure}[h]
   \centerline{\includegraphics[width=0.8\textwidth]{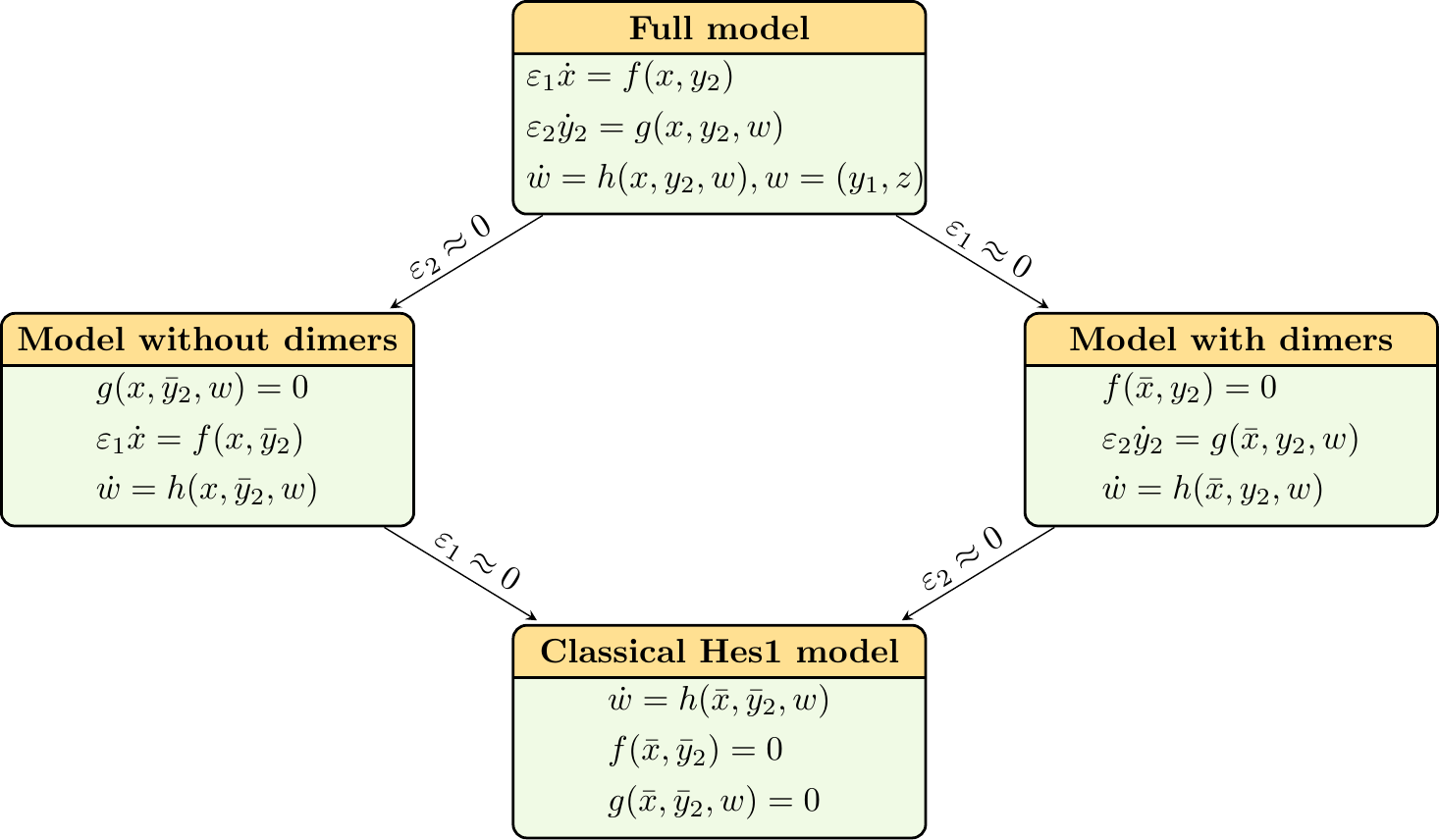}}
   \caption{Diagram of models created from the system~\eqref{nasz} and connection between them.\label{fig.diagram2}}
\end{figure}}}

\subsection{Application of The Tikhonov theorem}
Now we assume that the dimer creation is much faster process than other processes, so we assume that $\varepsilon_2$ is very small and we formulate the Tikhonov type theorem for this case. 
If we set $\varepsilon_2=0$, then system~\eqref{nasz}--\eqref{nasz_pocz} reduces to system~\eqref{uproszcz:n+2}, which has the form
\begin{equation}
\begin{split}\label{up}
\varepsilon_1 \bar x'&=f(\bar x,\varphi(\bar x, \bar w)),\\
\bar w'&=h(\bar x,\varphi(\bar x, \bar w),\bar w),
\end{split}
\qquad\qquad
\begin{split}
 \bar x(0)&=\mathring{x}\\
\bar w(0)&=\mathring{w},
\end{split}
\end{equation}
where 
$\varphi(x,w)=y_2$ is the solution of the equation $g(x,y_2,w)=0,$ the function $\varphi$ is given by \eqref{y2stacj} and $f$, $g$, $h$ are defined by \eqref{function_f}, \eqref{function_g}, \eqref{function_h}, respectively. 

\begin{thm}\label{full_2L}
   Assume that the functions $f\colon\Omega_x\times\Omega_{y_2}\to \R$ and $g,h\colon\Omega_x\times\Omega_{y_2}\times\Omega_w\to \R$ 
   are defined by~\eqref{function_f}--\eqref{function_h}. Then there exists $\varepsilon_0 >0,$ such that
   for any $\varepsilon_2\in(0,\varepsilon_0]$ there exists a unique solution  $(x_{\varepsilon_2}(t),y_{2,\varepsilon_2}(t),w_{\varepsilon_2}(t))$ to system~\eqref{nasz}--\eqref{nasz_pocz} on $[0,T]$ and the following conditions hold:
   \begin{align*}
      \lim_{\varepsilon_2\to 0} x_{\varepsilon_2}(t)&=  \bar x(t),\quad t\in [0,T]\\
      \lim_{\varepsilon_2\to 0} y_{2,\varepsilon_2}(t)&= \varphi(\bar x(t),\bar w(t)),\quad t\in (0,T]\\
      \lim_{\varepsilon_2\to 0} w_{\varepsilon_2}(t)&= \bar w(t),\quad t\in [0,T],
   \end{align*}
   where $(\bar x(t),\bar w(t))$ is the solution to system~\eqref{up} and constant $T$ 
   does not depend on $\varepsilon_2.$
\end{thm}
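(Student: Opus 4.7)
\emph{Proof proposal.} The plan is to directly invoke the Tikhonov theorem (Theorem~\ref{thm:Tikhonov}), treating $\varepsilon_1 > 0$ as a fixed parameter and $\varepsilon_2$ as the small parameter of the abstract theorem. I would take the slow variable $u = (x, w) \in \Omega_x \times \Omega_w$, the fast variable $v = y_2 \in \Omega_{y_2}$, and define
\begin{equation*}
F(u, v) = \bigl(\varepsilon_1^{-1} f(x, y_2),\, h(x, y_2, w)\bigr), \qquad G(u, v) = g(x, y_2, w).
\end{equation*}
With this identification system~\eqref{nasz}--\eqref{nasz_pocz} is exactly of the form $u' = F(u, v)$, $\varepsilon_2 v' = G(u, v)$ treated in Theorem~\ref{thm:Tikhonov}, and~\eqref{up} is the corresponding reduced system.

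I would then verify the hypotheses (A1)--(A5) on the compact invariant set $\Omega$ from Theorem~\ref{invariant}. For (A1), smoothness of $f$, $g$, $h$ as polynomials makes $F$ and $G$ Lipschitz on $\Omega$. Hypothesis (A2) is supplied by the explicit formula~\eqref{y2stacj}: the root $y_2 = \varphi(x, w)$ of $g(x, y_2, w) = 0$ is unique because $g$ is affine in $y_2$ with leading coefficient $-\bigl(1 + \theta \sum_{j=0}^{n-1} k_j x_j\bigr) \ne 0$, and $\varphi$ is smooth in $(x, w)$. Hypothesis (A3), uniform asymptotic stability of $\varphi(x, w)$ as an equilibrium of the layer equation $v'(\tau) = g(x, v, w)$, is precisely the statement of Proposition~\ref{stab_g}; the coefficient being bounded away from zero uniformly on $\Omega_x \times \Omega_w$ delivers the required uniformity.

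For (A4), the map $u \mapsto F(u, \varphi(u))$ is polynomial, hence Lipschitz on $\Omega_x \times \Omega_w$, which gives existence and uniqueness of the reduced trajectory $(\bar x, \bar w)$. That this orbit remains in $\Int(\Omega_x \times \Omega_w)$ for $t > 0$ follows from Remark~\ref{trapping}: the boundary-flow computations there use only positivity of $y_2$, which persists when $y_2$ is replaced by $\varphi(\bar x, \bar w) \in \Omega_{y_2}$, so the same inward-pointing estimates carry over verbatim to the reduced system. Finally, (A5) is immediate: the layer equation for $y_2$ is affine with a negative, non-degenerate leading coefficient, so $\varphi(\mathring x, \mathring w)$ attracts every initial datum in $\R$, and in particular every $\mathring y_2 \in \Omega_{y_2}$.

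With (A1)--(A5) in place, Theorem~\ref{thm:Tikhonov} yields the three stated limits on $[0,T]$ and $(0,T]$ respectively. The $\varepsilon_2$-independence of $T$ follows from the fact that $\Omega$ is a trapping region for the full system for every $\varepsilon_2 > 0$ (Remark~\ref{trapping}), so solutions exist for all $t \ge 0$ and $T$ may be chosen arbitrarily. The only genuinely non-routine point I anticipate is the interior-containment requirement in (A4); since Remark~\ref{trapping} essentially reduces this to a short boundary-flow check, the argument amounts to assembling the earlier ingredients rather than producing new analysis.
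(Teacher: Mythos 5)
Your proposal is correct and follows essentially the same route as the paper: identify $(x,w)$ as the slow and $y_2$ as the fast variable (with $\varepsilon_1>0$ fixed), verify (A1)--(A5) using the polynomial form of $f,g,h$, Proposition~\ref{stab_g} for (A3) and (A5), and Remark~\ref{trapping} for the interior-containment part of (A4), then apply Theorem~\ref{thm:Tikhonov}. You merely make explicit a few points the paper leaves implicit (the $\varepsilon_1^{-1}$ factor in the slow field, affinity of $g$ in $y_2$ for uniqueness of $\varphi$), which is fine.
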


\begin{proof}
It is enough to check the assumptions (A1)--(A5) of Theorem~\ref{thm:Tikhonov}. The assumptions (A1), and (A2) are satisfied in consequence of the form of functions $f,$ $g$, and $h$. Assumption (A3) also holds as a consequence of Proposition~\ref{stab_g}. 
The existence of unique solution to system~\eqref{up} is obvious due to the form of the right-had side of the system.
Note also that by Remark~\ref{trapping} the set $\Omega_x\times\Omega_{y_2}\times\Omega_w$ is a trapping region for system~\eqref{nasz}, and the set $U=\Omega_x\times\Omega_w$ is a trapping region for system~\eqref{up}.
In the consequence the solution $(\bar x,\bar w)\in \Int U$, thus assumption~(A4) holds.
Finally, Assumption (A5) is satisfied due to Proposition~\ref{stab_g}, and applying the Tikhonov theorem completes the proof.
\qed
\end{proof}

Now, we assume that the creation and dissociation of Hes1 dimers are much faster than translation and transcription processes, that is $\varepsilon_1\ll 1$. For $\varepsilon_1=0$ system~\eqref{up} reduces to the classical Hes1 model~\eqref{zreduk}, which we rewrite as
\begin{equation}\label{classical}
 w'=h(\psi(w),\varphi(\psi(w),w),w),\qquad w(0)=\mathring{w},
\end{equation}
where $\psi(w)=x$ is the solution of the equation $f(x,\varphi(x,w))=0,$ and the function $\psi$ is given by~\eqref{bart:wzor_psi_ogolny}. Now, we state theorem saying that for $\varepsilon_1$ small enough, the solutions to system~\eqref{classical} approximate solutions to system~\eqref{up} well.
\begin{thm}\label{red_2L}
Assume that the functions $f\colon\Omega_x\times\Omega_{y_2}\to \R$ and $h\colon\Omega_x\times\Omega_{y_2}\times\Omega_w\to \R$ 
are defined by~\eqref{function_f} and~\eqref{function_h}, respectively. Then there exists $\varepsilon_0 >0,$ such that
for any $\varepsilon_1\in(0,\varepsilon_0]$ there exists a unique solution  
$(x_{\varepsilon_1}(t),w_{\varepsilon_1}(t))$ to system~\eqref{up} on $[0,T]$ and the following conditions hold:
\begin{align*}
\lim_{\varepsilon_1\to 0} x_{\varepsilon_1}(t)&=  \psi(\bar w(t)),\quad t\in (0,T]\\
\lim_{\varepsilon_1\to 0} w_{\varepsilon_1}(t)&=  \bar w(t),\quad t\in [0,T]
\end{align*}
where $\bar w(t)$ is the solution to system~\eqref{classical} and constant $T$ is independent of $\varepsilon_1.$
\end{thm}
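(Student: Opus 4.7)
The plan is to apply Tikhonov's Theorem~\ref{thm:Tikhonov} to system~\eqref{up}, treating $x$ as the fast variable and $w$ as the slow one. With the identifications $G(w,x)=f(x,\varphi(x,w))$ and $F(w,x)=h(x,\varphi(x,w),w)$, the degenerate system is precisely~\eqref{classical}, so the proof reduces to verifying assumptions (A1)--(A5).

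Assumption (A1) is immediate because $\varphi$ from~\eqref{y2stacj} has a denominator bounded below by $1$, so both $F$ and $G$ are smooth (hence Lipschitz) on the compact set $\Omega_x\times\Omega_w$. For (A2), the algebraic derivation in Section~\ref{sec:1to2R} produces the unique continuous root $x=\psi(w)$ of $G(w,x)=0$ through formulas~\eqref{ogolny:wzor_y2}--\eqref{bart:wzor_psi_ogolny}. For (A4), rerunning the trapping-region computation of Remark~\ref{trapping} with $y_2$ replaced by its equilibrium value shows that solutions of~\eqref{classical} exist globally, are unique, and remain in the interior of the relevant compact set, so the solution $\bar w$ stays in $\Int U$ for every positive time. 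Assumption (A5) will be automatic once (A3) is established, since any admissible $\mathring x$ lies on the invariant simplex which coincides with the basin of attraction guaranteed there.

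The main obstacle is (A3): for each fixed $w$ the equation $x'=f(x,\varphi(x,w))$ must be globally asymptotically stable at $x=\psi(w)$, uniformly in $w$. This is \emph{not} a direct consequence of Proposition~\ref{stab_f}, because there $y_2$ is an external constant, whereas here $y_2=\varphi(x,w)$ is slaved to $x$, turning the linear generator $A(y_2)$ of the proof of Proposition~\ref{stab_f} into a nonlinear rank-one perturbation. The approach I would take is to first observe that the augmented variable $\hat x=(x_0,\ldots,x_{n-1},x_n)$ with $x_n=1-\sum_{j<n}x_j$ satisfies $\sum_{j=0}^{n}x_j=1$ invariantly, since $A(y_2)$ has zero column sums for every admissible $y_2$; the dynamics therefore stays on the compact simplex. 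Local exponential stability at $\psi(w)$ would then follow from a perturbation bound on the Jacobian of $f(\cdot,\varphi(\cdot,w))$: the unperturbed part $A(\varphi(\psi(w),w))$ has, by the proof of Proposition~\ref{stab_f}, a simple spectrum lying strictly in the left half-plane transverse to the conservation direction, and continuity in $y_2$ together with the smoothness of $\varphi$ on the compact domain shows that the rank-one correction cannot push any eigenvalue across the imaginary axis. Global attraction on the simplex and the required uniformity in $w$ would be secured via LaSalle's invariance principle applied to a relative-entropy-type Lyapunov functional exploiting the preserved Markov-chain structure of $A(\varphi(x,w))$; the whole construction is uniform in $w$ by compactness of $\Omega_w$. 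Once (A3) is in hand, Theorem~\ref{thm:Tikhonov} delivers the stated convergence on any prescribed interval $[0,T]$ with a threshold $\varepsilon_0$ depending only on $T$ and the chosen trapping region, and in particular independent of $\varepsilon_1$.
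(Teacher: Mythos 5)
Your overall strategy coincides with the paper's: apply Theorem~\ref{thm:Tikhonov} to system~\eqref{up} with $w$ slow and $x$ fast, identify the degenerate system with~\eqref{classical}, and verify (A1)--(A5); the paper's own proof is exactly this verification, settling (A3) and (A5) by invoking Proposition~\ref{stab_f} ``by an argument analogous to Theorem~\ref{full_2L}''. You are right, and it is a perceptive observation, that the layer equation here is $x'=f(x,\varphi(x,w))$ with $y_2=\varphi(x,w)$ slaved to $x$ via~\eqref{y2stacj}, so Proposition~\ref{stab_f} (which freezes $y_2$) does not apply verbatim; this is precisely the step the paper passes over with the word ``analogous''.

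The problem is that your substitute for (A3) is not a proof. First, the spectral step is unjustified: the correction $\partial_{y_2}f\cdot\nabla_x\varphi$ to the Jacobian of $x\mapsto f(x,\varphi(x,w))$ at $x=\psi(\cdot)$ is rank one but not small --- it carries factors of $\theta$, $k_j$, $\gamma_j$ of order one --- so ``continuity in $y_2$ together with smoothness of $\varphi$ on a compact set'' gives no control preventing an eigenvalue of the linear part (which, by Proposition~\ref{stab_f}, merely lies in the closed left half-plane with a zero eigenvalue along the conservation direction) from crossing the imaginary axis; one would need an explicit computation of this Jacobian, or a structural argument (e.g.\ exploiting that the $\theta$-bracket in~\eqref{y2stacj} vanishes at the root, or monotonicity/tridiagonal structure of the reduced flow). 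Second, even granting local stability, (A3) demands asymptotic stability of the layer equation uniformly in the frozen $w$, and (A5) demands that the initial data lie in the basin of attraction; your route to this is LaSalle with a ``relative-entropy-type'' functional, but no such functional is exhibited, and it is not clear one exists, since the generator $A(\varphi(x,w))$ and hence its stationary vector vary along the layer trajectory, so the standard entropy decay for a fixed Markov generator does not transfer to this nonlinear system. Thus the crucial assumptions (A3) and (A5) remain unverified in your write-up: you have correctly located the delicate point, but replaced the paper's (admittedly terse) appeal to Proposition~\ref{stab_f} with a sketch whose two key claims are themselves unproven.
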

\begin{proof}
It is easy to see that smoothness assumptions (A1) and (A2) of the Tikhonov theorem are satisfied. The stability properties, (A3) and (A5), as well as assumption (A4) hold, due to analogous argument as in the proof of Theorem~\ref{full_2L}, when we use Proposition~\eqref{stab_f} instead of Proposition~\ref{stab_g}. Thus, Theorem~\ref{thm:Tikhonov} yields the assertion of Theorem~\ref{red_2L}. 
\qed
\end{proof}

Assuming that the creation and dissociation of Hes1 dimers are much faster than translation and transcription processes, that is 
$\varepsilon_1$ is very small we reduce system~\eqref{nasz}--\eqref{nasz_pocz} to system~\eqref{uposzcz:n}, which reads
\begin{equation}
\begin{split}\label{up_right}
\varepsilon_2 \bar y_2'&=g(\varphi(\bar y_2),\bar y_2,\bar w),\\
\bar w'&=h(\varphi(\bar y_2),\bar y_2,\bar w),
\end{split}
\qquad\qquad
\begin{split}
 \bar y_2(0)&=\mathring{y_2}\\
\bar w(0)&=\mathring{w},
\end{split}
\end{equation}
where $\varphi(y_2)=x$ is the solution of the equation $f(x,y_2)=0,$ and the function $\varphi$ is given by~\eqref{y2stacj}.

\begin{thm}\label{full_2R}
Assume that the functions $f\colon\Omega_x\times\Omega_{y_2}\to \R$ and $g,h\colon\Omega_x\times\Omega_{y_2}\times\Omega_w\to \R$ 
are defined by~\eqref{function_f}--\eqref{function_h}. 
Then there exists $\varepsilon_0 >0,$ such that
for any $\varepsilon_1\in(0,\varepsilon_0]$ there exists a unique solution  $(x_{\varepsilon_1}(t),y_{2,\varepsilon_1}(t),w_{\varepsilon_1}(t))$ to system~\eqref{nasz}--\eqref{nasz_pocz} on $[0,T]$ and the following conditions hold:
\begin{align*}
\lim_{\varepsilon_1\to 0} x_{\varepsilon_1}(t)&=  \varphi(\bar w(t)),\quad t\in (0,T]\\
\lim_{\varepsilon_1\to 0} y_{2,\varepsilon_1}(t)&= \bar y_2(t),\quad t\in [0,T]\\
\lim_{\varepsilon_1\to 0} w_{\varepsilon_1}(t)&= \bar w(t),\quad t\in [0,T],
\end{align*}
where $(\bar y_2(t),\bar w(t))$ is the solution to system~\eqref{up_right} and constant $T$ is independent of $\varepsilon_1.$
\end{thm}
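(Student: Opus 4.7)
The plan is to apply the Tikhonov theorem (Theorem~\ref{thm:Tikhonov}) to system~\eqref{nasz}--\eqref{nasz_pocz}, but with the roles reorganized so that $x=(x_0,\dotsc,x_{n-1})$ is the fast variable and $(y_2,w)$ is the slow variable, keeping $\varepsilon_2>0$ fixed. That is, we rewrite~\eqref{nasz} as
\begin{equation*}
\varepsilon_1 x' = f(x,y_2), \qquad \binom{y_2}{w}' = \binom{\varepsilon_2^{-1} g(x,y_2,w)}{h(x,y_2,w)},
\end{equation*}
so that the parameter to be sent to zero is $\varepsilon_1$. The algebraic equation $f(x,y_2)=0$ is exactly~\eqref{ogolny:algebraic_bez1}, and its unique solution (for fixed $y_2\in\Omega_{y_2}$) is the function $\varphi(y_2)=x$ given by~\eqref{ogolny:wzor_y2} (in the notation of the theorem statement). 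This provides the candidate quasi-stationary manifold.

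I would then verify the five Tikhonov assumptions one by one, closely mirroring the proof of Theorem~\ref{red_2L}. For~(A1) and~(A2), smoothness of $f,g,h$ on the compact trapping region $\Omega=\Omega_x\times\Omega_{y_2}\times\Omega_w$ and the continuity of $\varphi$ follow from the polynomial form of the right-hand sides and from the explicit formula~\eqref{ogolny:wzor_y2}--\eqref{bart:wzor_psi_ogolny}. Assumption~(A3), the uniform asymptotic stability of the steady state $\varphi(y_2)$ of the initial layer system $x'(t)=f(x,y_2)$ with $y_2$ treated as parameter, is exactly the content of Proposition~\ref{stab_f}. Assumption~(A5) on the initial datum $\mathring x$ lying in the basin of attraction of $\varphi(\mathring y_2)$ is likewise immediate from Proposition~\ref{stab_f}, since the stability there is global in $\Omega_x$.

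The remaining assumption~(A4) requires a unique solution of the reduced problem~\eqref{up_right} whose trajectory stays in the interior of the slow domain $\Omega_{y_2}\times\Omega_w$ on $[0,T]$. Existence and uniqueness follow from the smoothness of the right-hand side (after substituting $x=\varphi(y_2)$). That the trajectory stays in the interior of the trapping region is the analogue of Remark~\ref{trapping} applied to~\eqref{up_right}: a direct check of the vector field on the boundary of $\Omega_{y_2}\times\Omega_w$ (using $0\le\varphi(y_2)\le 1$ componentwise and $\sum_j\varphi_j(y_2)=1$, which is built into the construction of $\varphi$) shows that the field points strictly inward on the boundary. Once (A1)--(A5) are in place, Theorem~\ref{thm:Tikhonov} yields the three claimed limits, with the boundary-layer behaviour of $x$ explaining why its convergence only holds for $t\in(0,T]$, while $y_2$ and $w$ converge uniformly on the closed interval.

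The only subtle point is~(A4): one has to make sure that substituting the algebraic solution $\varphi(y_2)$ into~\eqref{nasz} produces a genuine slow system on a trapping region, and that this remains true even though $\varepsilon_2$ appears explicitly in front of $\bar y_2'$ in~\eqref{up_right}. This is harmless because $\varepsilon_2$ is a fixed positive constant here (not sent to zero), so the slow flow is simply a smooth ODE on the compact set $\Omega_{y_2}\times\Omega_w$, and Proposition~\ref{stab_g} applied pointwise guarantees that the $y_2$-component of the slow flow is itself well-behaved. No new computation is needed beyond what Propositions~\ref{stab_g} and~\ref{stab_f} already provide.
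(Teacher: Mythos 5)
Your proposal is correct and follows essentially the same route as the paper: the paper simply states that Theorem~\ref{full_2R} is proved analogously to Theorems~\ref{full_2L} and~\ref{red_2L}, i.e.\ by checking the Tikhonov assumptions (A1)--(A5) with $x$ as the fast variable (and $\varepsilon_2$ fixed), using Proposition~\ref{stab_f} for (A3) and (A5) and the trapping-region argument of Remark~\ref{trapping} for (A4). Your verification matches this argument point by point, so no further comparison is needed.
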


In the next step we put $\varepsilon_2\approx0$ and we reduce system~\eqref{up_right} to
the classical form
\begin{equation}\label{class}
\bar w'=h(\varphi(\psi(\bar w)),\psi(\bar w),\bar w),\qquad \bar w(0)=\mathring{w},
\end{equation}
where $\psi(w)=y_2$ is the solution of the equation $g(\varphi(\bar y_2),\bar y_2,\bar w)=0.$
\begin{thm}\label{red_2R}
Assume that the functions $g,h\colon\Omega_x\times\Omega_{y_2}\times\Omega_w\to \R$
are defined by~\eqref{function_g}--\eqref{function_h}. Then there exists $\varepsilon_0 >0,$ such that
for any $\varepsilon_2\in(0,\varepsilon_0]$ there exists a unique solution  
$(x_{\varepsilon_2}(t),w_{\varepsilon_2}(t))$ to system~\eqref{up} on $[0,T]$ and the following conditions hold:
\begin{align*}
\lim_{\varepsilon_2\to 0} x_{\varepsilon_2}(t)&=  \psi(\bar w(t)),\quad t\in (0,T]\\
\lim_{\varepsilon_2\to 0} w_{\varepsilon_2}(t)&=  \bar w(t),\quad t\in [0,T]
\end{align*}
where $\bar w(t)$ is the solution to system~\eqref{class} and constant $T$ does not depend on $\varepsilon_2$.
\end{thm}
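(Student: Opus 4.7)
The plan is to apply the Tikhonov theorem (Theorem~\ref{thm:Tikhonov}) to system~\eqref{up_right}, treating $\bar y_2$ as the fast variable (with small parameter $\varepsilon_2$) and $\bar w=(\bar y_1,\bar z)$ as the slow variable. The whole argument parallels the proof of Theorem~\ref{full_2L}, with the roles of the two time scales swapped and with the $x$-component already eliminated via the inner function $\varphi$ from~\eqref{ogolny:wzor_y2}.

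The single computation that drives the proof is the identity
\[
g\bigl(\varphi(y_2),y_2,w\bigr)=y_1^2-y_2,
\]
which was already observed in the derivation of~\eqref{uposzcz:n}: substituting $x=\varphi(y_2)$ into the bracket multiplied by $\theta$ in~\eqref{function_g} kills that bracket, leaving only the linear term $-y_2+y_1^2$. From this identity the verification of the Tikhonov assumptions becomes essentially automatic. Smoothness (A1) follows from the polynomial/rational form of $g$, $h$ and $\varphi$ on the compact trapping region. For (A2), the algebraic equation $g(\varphi(\bar y_2),\bar y_2,\bar w)=0$ has the unique isolated root $\psi(\bar w)=\bar y_1^2$, which is smooth in $\bar w$. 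For (A3), the initial layer equation for fixed $w$ reads $\bar y_2'=y_1^2-\bar y_2$; this scalar linear ODE with coefficient $-1$ separated from zero has the globally asymptotically stable steady state $y_1^2$, uniformly in $w\in\Omega_w$.

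For (A4) I would invoke Remark~\ref{trapping}, which shows that $\Omega_w$ is a trapping region whose boundary is crossed strictly inward by the vector field of the reduced system~\eqref{class}; together with Lipschitz continuity of $\bar w\mapsto h(\varphi(\psi(\bar w)),\psi(\bar w),\bar w)$ on $\Omega_w$ this gives existence, uniqueness, and the interior condition $\bar w(t)\in\Int\Omega_w$ needed for the Tikhonov conclusion. Assumption (A5) is free: since the fast equation is globally asymptotically stable, any initial value $\mathring{y}_2\in\Omega_{y_2}$ lies in the basin of attraction of $\psi(\mathring{w})=\mathring{y}_1^2$.

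The only step that deserves care is the collapse of $g(\varphi(y_2),y_2,w)$ to $y_1^2-y_2$; all stability statements depend on this simplification, and without it one would have to invoke Proposition~\ref{stab_g} together with a composition argument (as in the proof of Theorem~\ref{red_2L} via Proposition~\ref{stab_f}). Once the identity is recorded, the application of Theorem~\ref{thm:Tikhonov} is routine and yields exactly the three limits claimed, with $T>0$ arbitrary and $\varepsilon_0$ independent of the choice of $T$ up to the usual restriction in the Tikhonov theorem.
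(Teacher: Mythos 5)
Your proposal is correct and follows the same route as the paper: the paper proves Theorem~\ref{red_2R} simply by declaring it ``analogous'' to Theorems~\ref{full_2L} and~\ref{red_2L}, i.e.\ by applying the Tikhonov theorem (Theorem~\ref{thm:Tikhonov}) to system~\eqref{up_right} with $\bar y_2$ as the fast variable and $\bar w$ as the slow one, and you reconstruct exactly that argument (including the correct reading of the statement, which should refer to~\eqref{up_right} and $y_{2,\varepsilon_2}$ rather than~\eqref{up} and $x_{\varepsilon_2}$). The one place where you genuinely diverge is the verification of (A3): the paper attributes it to Proposition~\ref{stab_g}, but that proposition treats $\varepsilon_2 y_2'=g(x,y_2,w)$ with $x$ \emph{fixed}, whereas the layer equation relevant here is $y_2'=g(\varphi(y_2),y_2,w)$ with the quasi-stationary $x=\varphi(y_2)$ slaved to the fast variable; your observation that the $\theta$-bracket in~\eqref{function_g} vanishes identically along $x=\varphi(y_2)$ (the same cancellation used in Section~\ref{sec:1to2R} to derive~\eqref{uposzcz:n}), so that the layer equation collapses to the linear ODE $y_2'=y_1^2-y_2$, handles (A2), (A3) and (A5) directly and is the tighter justification. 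The remaining points (smoothness for (A1), the trapping/interior argument via Remark~\ref{trapping} and Lipschitz continuity of the right-hand side of~\eqref{class}, which is just~\eqref{zreduk}, for (A4)) match the paper's treatment of the analogous theorems, so your proof is, if anything, slightly more careful than the paper's one-line argument.
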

The proofs of the Theorems~\ref{full_2R} and~\ref{red_2R} are analogously as Theorems~\ref{full_2L} and~\ref{red_2L}.
It is worth pointing out that the crucial assumption of these theorems is the Assumption (A3) of the Tikhonov theorem, i.e.
the steady state of the respective systems is asymptotically stable independently of the parameters value, which is guaranteed by Propositions~\ref{stab_f} and~\ref{stab_g}.

\section{Comparison of stability of the positive steady state}\label{sec:stab}
In this section we examine stability of the steady states for systems derived in Section~\ref{sec:fulsys}.
The first two subsections deal with the case of one binding site (i.e. $n = 1$) because with the increase of $n$ the complexity of the equations makes their analysis difficult.

\subsection{The full system --- the case $n=1$}\label{sec:fulln1}
Note that due to the scaling we have $k_0=1$. In~\eqref{ogolny:epsilony_red}, we wrote explicitly $k_0$ due to the symmetry of the notation. However, here we rewrite system~\eqref{ogolny:epsilony_red} for $n=1$ and $k_j$ for $j>0$
   are not present. Thus, we use the equality $k_0=1$ replacing 
   $k_0$ by~$1$, and the system reads
   \begin{equation}\label{sys:n=1}
   \begin{split}
   \varepsilon_1 x_0'&=\gamma_1(1-x_0)- y_2x_0,\\
   y_1'&=k(y_2-y_1^2)+\delta_1\bigl(z-y_1\bigr),\\
   \varepsilon_2 y_2'&=\theta(\gamma_1(1 -x_0)-x_0y_2)-y_2+y_1^2,\\	
   z'&=\delta_2\bigl(r_0x_0- z\bigr).
   \end{split}
   \end{equation}
 
    \begin{thm}\label{prop:full:n=1:stab}
       The positive steady state $(\bar x_0,1,1,1)$ of system~\eqref{sys:n=1} is locally asymptotically stable.
    \end{thm}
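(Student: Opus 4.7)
The plan is to apply Lyapunov's indirect method: reduce the claim to checking that all eigenvalues of the Jacobian $J$ of the right-hand side of~\eqref{sys:n=1} at $(\bar x_0,1,1,1)$ lie in the open left half-plane, which I would verify through the Routh--Hurwitz criterion.

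First, I would linearize~\eqref{sys:n=1} at $(\bar x_0,1,1,1)$, using the equilibrium relations $\bar x_0=\gamma_1/(\gamma_1+1)$ and $r_0\bar x_0=1$. Introducing the shorthand $a=(\gamma_1+1)/\varepsilon_1$, $b=\bar x_0/\varepsilon_1$, $c=2k+\delta_1$, $d=k$, $e=\delta_1$, $f=\theta(\gamma_1+1)/\varepsilon_2$, $g=2/\varepsilon_2$, $h=(1+\theta\bar x_0)/\varepsilon_2$, $i=\delta_2 r_0$, $j=\delta_2$, the matrix $\lambda I-J$ has many zero entries, so cofactor expansion along the fourth column produces the compact form
\begin{equation*}
p(\lambda)=\det(\lambda I-J)=(\lambda+j)\bigl[(\lambda+a)\bigl((\lambda+c)(\lambda+h)-dg\bigr)-bf(\lambda+c)\bigr]+egib.
\end{equation*}

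Second, I would expand $p(\lambda)=\lambda^4+A_3\lambda^3+A_2\lambda^2+A_1\lambda+A_0$ and establish that $A_k>0$ for $k=0,1,2,3$. Two algebraic identities obtained from the equilibrium relations do most of the work: $ah-bf=(\gamma_1+1)/(\varepsilon_1\varepsilon_2)$ and $ach-dga-bfc=(\gamma_1+1)\delta_1/(\varepsilon_1\varepsilon_2)$, both strictly positive. Together with $ch-dg=[\delta_1+(2k+\delta_1)\theta\bar x_0]/\varepsilon_2>0$ and $egib=2\delta_1\delta_2/(\varepsilon_1\varepsilon_2)$, they yield the clean expression $A_0=\delta_1\delta_2(\gamma_1+3)/(\varepsilon_1\varepsilon_2)$ and express $A_1,A_2,A_3$ as sums of manifestly non-negative quantities.

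Third, I would verify the two nontrivial Routh--Hurwitz inequalities, $A_3A_2>A_1$ and $A_1A_2A_3>A_1^2+A_3^2A_0$. The first follows fairly directly after grouping the difference $A_3A_2-A_1$ by its scaling in $1/\varepsilon_1$ and $1/\varepsilon_2$ and applying the identities above. The main obstacle is the second inequality: I expect to rewrite it as $A_1(A_2A_3-A_1)>A_3^2A_0$ and to separate the contributions of orders $O(1)$, $O(1/\varepsilon_i)$ and $O(1/(\varepsilon_1\varepsilon_2))$, using the factorizations of $ah-bf$, $ch-dg$ and $ach-dga-bfc$ to cancel the potentially competing cross terms. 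Once this is done, the residual reduces to a polynomial inequality in the physical parameters $\gamma_1, k, \delta_1, \delta_2, \theta$ (with $r_0$ constrained by~\eqref{rown_r0} for $n=1$), and it is here that the bulk of the algebraic effort of the proof will lie.
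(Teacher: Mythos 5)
Your setup is correct and coincides with the paper's strategy: linearise \eqref{sys:n=1} at $(\bar x_0,1,1,1)$, compute the quartic characteristic polynomial, check positivity of its coefficients, and reduce local stability to the single nontrivial Routh--Hurwitz inequality (in your notation $A_1A_2A_3>A_1^2+A_3^2A_0$, in the paper's $a_1a_2a_3>a_3^2+a_1^2a_4$). Your cofactor expansion, the identities $ah-bf=(\gamma_1+1)/(\varepsilon_1\varepsilon_2)$, $a(ch-dg)-bfc=(\gamma_1+1)\delta_1/(\varepsilon_1\varepsilon_2)$, $ch-dg>0$, and the value $A_0=\delta_1\delta_2(\gamma_1+3)/(\varepsilon_1\varepsilon_2)$ all check out and agree (up to the paper's time rescaling by $\varepsilon_2$) with the coefficients \eqref{coeff}; the intermediate check $A_3A_2>A_1$ corresponds to the paper's verification that $a_1a_2-a_3>0$.

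The genuine gap is that the decisive inequality is never actually established. You state that you ``expect'' to rewrite it as $A_1(A_2A_3-A_1)>A_3^2A_0$, separate contributions by their order in $1/\varepsilon_1$ and $1/\varepsilon_2$, cancel the competing cross terms, and then deal with ``the residual polynomial inequality'' --- but this residual inequality \emph{is} the content of the theorem, and nothing in the proposal shows it holds for all positive $\gamma_1,k,\theta,\delta_1,\delta_2,\varepsilon_1,\varepsilon_2$. The verification is not automatic: in the paper's full expansion (given in the Appendix) almost all terms are manifestly non-negative, but several are not termwise obvious --- for instance factors of the form $\zeta-1$, $\zeta^2-1-\theta\bar x_0$ with $\zeta=\varepsilon\gamma_1+\varepsilon+\theta\bar x_0+1$, and one group that requires the estimate $(\varepsilon-1)\varepsilon+1\ge 0$ --- so the sign structure has to be exhibited explicitly rather than asserted. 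Until that expansion (or an equivalent argument) is carried out, the proposal is a correct plan with the same architecture as the paper's proof, but not yet a proof of local asymptotic stability.
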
  

\begin{proof}   
In order to calculate stability of the positive steady state $(\bar x_0,1,1,1)$ of system~\eqref{sys:n=1}, where $\bar x_0=1/r_0,$ $r_0=1+1/\gamma_1$, we rescale time to eliminate $\varepsilon_2$ from the left-hand side and then the Jacobi matrix reads
   \[
   \begin{bmatrix}
   -(\gamma_1+1)\varepsilon-\lambda  & 0 & -\bar x_0\varepsilon& 0\\
   0 & -(2k+\delta_1)\varepsilon_2-\lambda & k\varepsilon_2 & \varepsilon_2 \delta_1 \\
   -\theta(\gamma_1+1) & 2 & -(\theta \bar x_0+1)-\lambda & 0\\
   \varepsilon_2 \delta_2 r_0 & 0 & 0&  -\varepsilon_2 \delta_2-\lambda
   \end{bmatrix},
   \]
   where $\varepsilon=\frac{\varepsilon_2}{\varepsilon_1}$. 
   The characteristic polynomial in this case reads
   \begin{equation}\label{cha:n=1}
   W_1(\lambda) = \lambda^4 + a_1 \lambda^3 + a_2\lambda^2 + a_3\lambda + a_4,
   \end{equation}
   where 
   \begin{equation}\label{coeff}
   a_1 = \varepsilon_2\delta_2 +c_2, \quad 
   a_2 = \varepsilon_2\delta_2c_2 +c_1, \quad 
   a_3 = \varepsilon_2\delta_2c_1 +c_0,\quad 
   a_4 = \varepsilon_2\delta_2c_0 +2\varepsilon\varepsilon_2^2\delta_1\delta_2
   \end{equation}
   (we used the fact that $r_0\bar x_0=1 $),   
   and 
   \[
   \begin{split}
   c_0 &= \varepsilon\varepsilon_2 \delta_1 \bigl(\gamma_1+1\bigr)>0,\\
   c_1 &= \varepsilon\bigl(\gamma_1+1\bigr)\bigl(\varepsilon_2(2k+\delta_1)+1\bigr) 
   +\varepsilon_2\bigl(2k\theta\bar x_0+\delta_1(1+\theta \bar x_0)\bigr)>0, \\
   c_2 &= \varepsilon \bigl(\gamma_1+1\bigr) + \varepsilon_2\bigr(2k+\delta_1\bigl) + \theta \bar x_0 +1 >0.        
   \end{split}
   \]
   We check that all eigenvalues of the characteristic polynomial $W_1(\lambda)$ are negative.
   We observe that all coefficients $a_i$ of~\eqref{cha:n=1} are positive.
   Then according to the Hurwitz theorem, it is enough to show that
   \[
   a_1a_2a_3>a_3^2+a_1^2a_4,
   \]
   which is equivalent to $a_3(a_1a_2-a_3)>a_1^2a_4.$
   Note that the necessary condition $a_1a_2-a_3>0$ occurs
   \[
   \begin{split}
   a_1a_2-a_3 &=\left(\varepsilon_2\delta_2+c_2\right)\varepsilon_2\delta_2c_2+c_1c_2-c_0\\
   &=\varepsilon_2\delta_2c_2 \bigl(\varepsilon_2\delta_2+c_2\bigr) + c_3\bigl(c_0+c_4\bigr)+c_4>0,
   \end{split}
   \]
   where
   \[
   c_3 =\varepsilon(\gamma_1+1)+\varepsilon_2(2k+\delta_1)+\theta\bar x_0, \quad 
   c_4 = \varepsilon(\gamma_1+1)(2\varepsilon_2 k+1) + \varepsilon_2\Bigl(2k\theta\bar x_0+\delta_1(1+\theta \bar x_0)\Bigr).
   \]
   Note also that 
   \[
   a_4 = \varepsilon\varepsilon_2^2\delta_1\delta_2\bigl(\gamma_1+3\bigr)>0.
   \] 
Using the formulas for the coefficients $a_i,$ i.e.~\eqref{coeff}, after tedious but direct calculations, we obtain $a_3\bigl(a_1a_2-a_3\bigr)-a_1^2a_4>0$. We include these computations in the Appendix.
\qed
\end{proof}
   
\subsection{Model without dimers --- the case $n=1$}

For $n=1$ formula \eqref{y2stacj} takes the form
\[
y_2=\frac{y_1^2+\theta\gamma_1(1-x_0)}{1+\theta x_0}
\]
and using the equality $x_0+x_1=1$ we write system~\eqref{uposzcz:n} for $n=1$ as
\begin{equation}\label{uproszcz:n=1}
\begin{split}
\varepsilon_1 x_0'&=\frac{1}{1+\theta x_0}\left(\gamma_1(1-x_0)-x_0y_1^2\right),\\
y_1'&=\frac{k\theta}{1+\theta x_0}\left(\gamma_1(1-x_0)-x_0y_1^2\right)+\delta_1\bigl(z-y_1\bigr),\\
z'&=\delta_2\bigl(r_0x_0- z\bigr).
\end{split}
\end{equation}
We observe that the steady state is 
\[
\left(\frac{\gamma_1}{1+\gamma_1},1,1\right).
\]
\begin{thm}
   There exists a~unique positive steady state of system~\eqref{uproszcz:n=1}, which is asymptotically stable independently of the parameters value.
\end{thm}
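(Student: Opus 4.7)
The plan is to linearise~\eqref{uproszcz:n=1} at the positive steady state $(\bar x_0,1,1)$, $\bar x_0=\gamma_1/(1+\gamma_1)$, and to apply the Routh--Hurwitz criterion to the resulting characteristic polynomial. The key structural observation I would use is that the first two equations share the common factor $H(x_0,y_1):=\gamma_1(1-x_0)-x_0 y_1^2$, which vanishes at the steady state. In consequence, differentiating the rational coefficient $H/(1+\theta x_0)$ picks up only the partials of $H$, and the resulting Jacobian has the form
\[
J=\begin{pmatrix} -a & -b & 0\\ -c & -d-\delta_1 & \delta_1\\ \delta_2 r_0 & 0 & -\delta_2 \end{pmatrix},
\]
where, with $A=1/(1+\theta\bar x_0)$, one sets $a=A(\gamma_1+1)/\varepsilon_1$, $b=2A\bar x_0/\varepsilon_1$, $c=k\theta\varepsilon_1 a$, $d=k\theta\varepsilon_1 b$. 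The crucial algebraic consequence of the shared factor is the identity $ad=bc$.

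Exploiting $ad=bc$ causes massive cancellations when expanding $\det(\lambda I-J)$, leaving the cubic $\lambda^3+A_2\lambda^2+A_1\lambda+A_0$ with manifestly positive coefficients $A_2=S+\delta_2$, $A_1=a\delta_1+\delta_2 S$, $A_0=\delta_1\delta_2(a+br_0)$, where $S:=a+d+\delta_1$. Moreover, the steady-state identity $\bar x_0 r_0=1$ keeps $A_0$ clean: $br_0=2a/(\gamma_1+1)$. For a cubic, Routh--Hurwitz reduces to a single inequality $A_2A_1>A_0$, which, after expansion and another round of cancellations, takes the compact form
\[
Sa\delta_1+S\delta_2(S+\delta_2)>\frac{2a\delta_1\delta_2}{\gamma_1+1}.
\]

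The main obstacle, and the only genuinely nontrivial step, is verifying this inequality uniformly in all admissible parameters --- crucially with no smallness assumption on $\varepsilon_1$, since the theorem asserts stability for \emph{all} parameter values. I would handle it by two applications of AM--GM. The first gives $S(a\delta_1+\delta_2^2)\ge 2S\delta_2\sqrt{a\delta_1}$; the second, applied to $S\ge a+\delta_1$, gives $S\ge 2\sqrt{a\delta_1}$, whence $S\sqrt{a\delta_1}\ge 2a\delta_1\ge 2a\delta_1/(\gamma_1+1)$. Multiplying, $Sa\delta_1+S\delta_2^2\ge 2a\delta_1\delta_2/(\gamma_1+1)$, leaving the harmless positive remainder $S^2\delta_2$, so $A_2A_1-A_0\ge S^2\delta_2>0$ and the proof is complete. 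The scheme parallels the quartic Routh--Hurwitz argument used in Theorem~\ref{prop:full:n=1:stab} for the full system; the quasi-stationary reduction trades a quartic for a cubic and makes the positivity check considerably more tractable.
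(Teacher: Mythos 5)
Your proposal is correct and follows essentially the same route as the paper: linearise at $(\bar x_0,1,1)$ using that the common factor $\gamma_1(1-x_0)-x_0y_1^2$ vanishes there, obtain the cubic characteristic polynomial (your $A_2,A_1,A_0$ coincide with the paper's $a_1,a_2,a_3$, since $br_0=2a/(\gamma_1+1)$ gives $A_0=a\delta_1\delta_2(\gamma_1+3)/(\gamma_1+1)=\frac{\eta\delta_1\delta_2}{\varepsilon_1}(3+\gamma_1)$), and close with the single Routh--Hurwitz inequality $A_2A_1>A_0$. The only difference is cosmetic: you verify that inequality by two AM--GM estimates, whereas the paper isolates two positive products from the expansion of $a_1a_2$ that already dominate $a_3$; both arguments are uniform in all parameters, as required.
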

\begin{proof}
   The characteristic matrix for the steady state $\left(\frac{\gamma_1}{1+\gamma_1},1,1\right)$ of~\eqref{uproszcz:n=1} reads
   \[
   \begin{bmatrix}
   -\frac{\eta (1+\gamma_1)}{\varepsilon_1} & - \frac{2\eta\gamma_1}{\varepsilon_1(1+\gamma_1)} & 0 \\
   -k\eta\theta(1+\gamma_1) & -\frac{2k\eta\theta\gamma_1}{1+\gamma_1} -\delta_1 & \delta_1 \\
   \delta_2 \frac{1+\gamma_1}{\gamma_1} & 0 & -\delta_2
   \end{bmatrix},
   \quad 
   \eta = \frac{1+\gamma_1}{1+\gamma_1(1+\theta)}
   \]   
   Then the characteristic function takes the form
   \[
      W(\lambda)=\lambda^3+a_1\lambda^2+a_2\lambda+a_3,
   \]
   where
   \begin{equation}\label{wsp_n=1}
   \begin{split}
   a_1&=\delta_1+\delta_2+\eta\frac{1+\gamma_1}{\varepsilon_1}+2k\eta\theta\frac{\gamma_1}{1+\gamma_1},\\
   a_2&=\eta\bigl(\delta_1+\delta_2\bigr)\frac{1+\gamma_1}{\varepsilon_1}+2k \eta\theta \delta_2\frac{\gamma_1}{1+\gamma_1}+\delta_1\delta_2,
   \\
   a_3&=\frac{\eta\delta_1\delta_2}{\varepsilon_1}(3+\gamma_1).
   \end{split}
   \end{equation}   
   For the polynomial $W(\lambda)$ of degree $3$ the Routh-Hurwitz criterion simplify to $a_1,a_3>0$ and $a_1a_2>a_3.$
   Note that $a_1,a_3>0$ and the condition $a_1a_2>a_3$ is equivalent to 
   \[
   \left(\delta_1+\delta_2+\eta\frac{1+\gamma_1}{\varepsilon_1}+2k\eta\theta\frac{\gamma_1}{1+\gamma_1}\right)
   \left(\eta\bigl(\delta_1+\delta_2\bigr)\frac{1+\gamma_1}{\varepsilon_1}+2k \eta\theta \delta_2\frac{\gamma_1}{1+\gamma_1}+\delta_1\delta_2\right)
   >\frac{\eta\delta_1\delta_2}{\varepsilon_1}(3+\gamma_1).
   \]
Moreover, the above inequality is always fulfilled. Namely, multiplication of $\delta_1 + \delta_2$ from the first bracket by the first term of the second bracket gives 
$\frac{\eta}{\varepsilon_1}(\delta_1+\delta_2)^2(1+\gamma_1)$. Adding to this expression the second term of the first bracket multiplied by the last term of the second bracket gives 
   \[
      \frac{\eta}{\varepsilon_1}(\delta_1+\delta_2)^2(1+\gamma_1) + \frac{\eta\delta_1\delta_2}{\varepsilon_1}(1+\gamma_1) = 
      \frac{3\eta\delta_1\delta_2}{\varepsilon_1}(1+\gamma_1) + \frac{\eta}{\varepsilon_1}(\delta_1^2+\delta_2^2)(1+\gamma_1) 
      > \frac{\eta\delta_1\delta_2}{\varepsilon_1}(3+\gamma_1).     
   \]
   This completes the proof.
\qed
\end{proof}

\subsection{Model with dimers}

Now, we formulate theorem considering stability of the steady state $(1,1,1)$ of the system~\eqref{uposzcz:n}.
\begin{thm}\label{cond_stab}
If the function $\psi$ given by~\eqref{bart:wzor_psi_ogolny} satisfies the following inequality
      \begin{equation}\label{psi}
      -\psi'(1) < \frac{\Bigl(\varepsilon_2\bigl(2k+\delta_1\bigr)+1\Bigr)
         \Bigl(\varepsilon_2\delta_2\bigl(2k+\delta_1+\delta_2\bigr)+\delta_1+\delta_2\Bigr)}
      {2\varepsilon_2 r_0 \delta_1\delta_2},
      \end{equation}
   then the steady state $(1,1,1)$ of system~\eqref{uposzcz:n} is locally asymptotically stable. 
\end{thm}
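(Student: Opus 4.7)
The plan is to linearise system~\eqref{uposzcz:n} at the positive steady state $(1,1,1)$ and to apply the Routh--Hurwitz criterion to the resulting cubic characteristic polynomial. Computing the partial derivatives of the three right-hand sides and using $\psi(1)=1/r_0$ (which makes $(1,1,1)$ a steady state) I obtain the Jacobian
\[
J=\begin{bmatrix}
-2k-\delta_1 & k & \delta_1\\[2pt]
2/\varepsilon_2 & -1/\varepsilon_2 & 0\\[2pt]
0 & \delta_2 r_0\,\psi'(1) & -\delta_2
\end{bmatrix}.
\]
Expanding $\det(J-\lambda I)$ along the last row (or the first column) and collecting terms yields the characteristic polynomial $\lambda^3+A_1\lambda^2+A_2\lambda+A_3$ with
\[
A_1=2k+\delta_1+\delta_2+\tfrac{1}{\varepsilon_2},\qquad
A_2=\tfrac{\delta_1+\delta_2}{\varepsilon_2}+(2k+\delta_1)\delta_2,\qquad
A_3=\tfrac{\delta_1\delta_2}{\varepsilon_2}\bigl(1-2r_0\psi'(1)\bigr).
\]

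Next I would verify the sign conditions. Since all parameters are positive, $A_1>0$ and $A_2>0$ are immediate; as $\psi$ is strictly decreasing (see the formula~\eqref{bart:wzor_psi_ogolny}) we have $\psi'(1)<0$, which gives $A_3>0$. By the Routh--Hurwitz criterion for a cubic, local asymptotic stability is then equivalent to the single inequality $A_1A_2>A_3$. Rearranging this condition with respect to $-\psi'(1)$ and multiplying by $\varepsilon_2$ gives
\[
-\psi'(1)\;<\;\frac{\varepsilon_2 A_1A_2-\delta_1\delta_2}{2r_0\delta_1\delta_2}.
\]

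The final — and slightly tedious — step is the algebraic identity showing that the numerator above factors as claimed. Multiplying out $\varepsilon_2^2A_1A_2-\varepsilon_2\delta_1\delta_2$ and reorganising the terms of degree $0$, $1$ and $2$ in $\varepsilon_2$, I expect to recognise exactly the expansion of
\[
\bigl(\varepsilon_2(2k+\delta_1)+1\bigr)\bigl(\varepsilon_2\delta_2(2k+\delta_1+\delta_2)+\delta_1+\delta_2\bigr),
\]
the key cancellation being $(2k+\delta_1)\delta_2-\delta_1\delta_2=2k\delta_2$ in the linear-in-$\varepsilon_2$ coefficient, which lets $\delta_2(2k+\delta_1+\delta_2)$ appear together with $(2k+\delta_1)(\delta_1+\delta_2)$. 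This identity converts the Routh--Hurwitz inequality $A_1A_2>A_3$ into~\eqref{psi}, proving the theorem. The only real obstacle is this bookkeeping; everything else is routine linearisation and application of the Hurwitz criterion.
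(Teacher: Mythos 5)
Your proposal is correct and follows essentially the same route as the paper: linearisation at $(1,1,1)$, the same Jacobian and cubic characteristic polynomial, positivity of the coefficients from $\psi'(1)<0$, and the Routh--Hurwitz condition $A_1A_2>A_3$ rearranged into~\eqref{psi}. The factorisation you anticipate does hold, since $\varepsilon_2^2A_1A_2-\varepsilon_2\delta_1\delta_2=\bigl(\varepsilon_2(2k+\delta_1)+1\bigr)\bigl(\varepsilon_2\delta_2(2k+\delta_1+\delta_2)+\delta_1+\delta_2\bigr)$, which is exactly the equivalent inequality the paper states.
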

\begin{proof}
   Linearising the system~\eqref{uposzcz:n} around the steady state $(1,1,1)$ we obtain 
   the following matrix 
      \[
      \begin{bmatrix} 
      -A~& k & \delta_1 \\
      2/\varepsilon_2 & -1/\varepsilon_2 & 0 \\
      0 & r_0\delta_2\psi'(1) & -\delta_2
      \end{bmatrix},
      \quad A~= 2k +\delta_1.
      \]	
   The characteristic polynomial reads 
        \[
      W(\lambda) = \lambda^3 +\biggl(A+\delta_2+\frac{1}{\varepsilon_2}\biggr) \lambda^2 
      +\Biggl(\frac{\delta_1}{\varepsilon_2}+\delta_2\biggl(A+\frac{1}{\varepsilon_2}\biggr)\Biggr) \lambda 
      + \frac{\delta_1\delta_2}{\varepsilon_2}\Bigl(1-2r_0\psi'(1)\Bigr).
      \] 
   Due to positivity of the coefficient and negativity of $\psi'(1)$ we can immediately 
   conclude that all coefficients of polynomial $W$ are positive. 
   Thus, due to the Routh-Hurwitz criterion, 
   the steady state is locally stable if the inequality 
   \[ 
   \biggl(A+\delta_2+\frac{1}{\varepsilon_2}\biggr)\Biggl(\frac{\delta_1}{\varepsilon_2}+\delta_2\biggl(A+\frac{1}{\varepsilon_2}\biggr)\Biggr)> \frac{\delta_1\delta_2}{\varepsilon_2}\left(1-2r_0\psi'(1)\right)
   \]
   which is equivalent to
    \[
   \Bigl(A\varepsilon_2+1\Bigr)\Bigl(\delta_1+\delta_2(A\varepsilon_2+1)+\varepsilon_2\delta_2^2\Bigr)
   > 
   -2r_0\delta_1\delta_2\varepsilon_2 \psi'(1).
   \]
This completes the proof.
\qed
\end{proof}
\begin{rem}
If the strict inequality reverse to~\eqref{psi} holds, that is 
\begin{equation}\label{psi:nstab}
      -\psi'(1) > \frac{\Bigl(\varepsilon_2\bigl(2k+\delta_1\bigr)+1\Bigr)
         \Bigl(\varepsilon_2\delta_2\bigl(2k+\delta_1+\delta_2\bigr)+\delta_1+\delta_2\Bigr)}
      {2\varepsilon_2 r_0 \delta_1\delta_2},
\end{equation}
then the steady state $(1,1,1)$ of system~\eqref{uposzcz:n} is unstable.
\end{rem}

Recall that due to the chosen scaling equality $\psi(1)=\frac{1}{r_0}$ holds, where 
$r_0$ is defined by~\eqref{rown_r0}, i.e.
\begin{equation*}
   r_0=1+\sum_{j=1}^{n}\frac{1}{j!}\frac{\tilde k_0\ldots \tilde k_{j-1}}{\tilde \gamma_1\ldots\tilde \gamma_j}.
   \end{equation*}
The stability condition presented in Theorem~\ref{cond_stab} is 
valid for an arbitrary $C^1$ class function $\psi$. However, due to the origin of system~\eqref{uposzcz:n}, 
the function $\psi$ has a specific form given by~\eqref{bart:wzor_psi_ogolny}. 
Therefore, both sides of inequality~\eqref{psi} depend on the number of binding sites $n$. 
In the following, we prove that if the number of binding sites is small (not grater than 4) 
then the steady state is always stable, regardless of the value of other parameters. 
The destabilisation of the steady state is possible if there is at least 5 binding sites. 

\begin{thm}\label{thm:stabn} 
   If the function $\psi$ is given by~\eqref{bart:wzor_psi_ogolny}, then
      \begin{enumerate}
      \item for $n\le 4$ the steady state of system~\eqref{uposzcz:n} is locally asymptotically stable,
      \item for $n\ge 5$ there exists set of parameters of model~\eqref{uposzcz:n} as well as the function
      $\psi$ such that  the steady state of system~\eqref{uposzcz:n} is unstable. More precisely, if $n\ge 5,$
			$\delta_1$, $\delta_2$ and $\varepsilon_2$ sufficiently close to 1, $k$ small enough, $\psi(y) = \dfrac{1}{1+(r_0-1)y^n}$ and 
			 $r_0>\dfrac{n}{n-4},$ then the steady state of system~\eqref{uposzcz:n} is unstable.
			\end{enumerate}
\end{thm}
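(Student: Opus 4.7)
The plan is to apply Theorem~\ref{cond_stab} (for stability) together with the converse instability criterion~\eqref{psi:nstab} from the remark following it. In both parts the argument reduces to comparing
\[
-\psi'(1) \;=\; \frac{1}{r_0^2}\sum_{j=1}^{n} j\,c_j, \qquad c_j \;=\; \frac{k_0 k_1 \cdots k_{j-1}}{j!\,\gamma_1 \gamma_2\cdots \gamma_j},
\]
with the threshold on the right-hand side of~\eqref{psi}. Since $c_j\ge 0$ and $\sum_{j=1}^n c_j = r_0-1$ by~\eqref{rown_r0}, the elementary weighting bound $\sum j c_j \le n(r_0-1)$ yields
\[
-\psi'(1) \;\le\; \frac{n(r_0-1)}{r_0^2},
\]
with equality reached precisely when the weight concentrates at $j=n$, i.e., for $\psi(y) = 1/(1+(r_0-1)y^n)$. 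This upper bound is what drives Part~1, while the extremal $\psi$ is what drives Part~2.

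\textbf{Part 1 ($n\le 4$).} I would show that the right-hand side of~\eqref{psi} is bounded below by $4/r_0$, regardless of $k,\delta_1,\delta_2,\varepsilon_2>0$. Rewriting the second factor in the numerator as
\[
\varepsilon_2\delta_2(2k+\delta_1+\delta_2)+\delta_1+\delta_2 \;=\; (1+\varepsilon_2\delta_2)(\delta_1+\delta_2) + 2k\varepsilon_2\delta_2,
\]
and dropping the non-negative $2k$ contributions in both factors gives
\[
\bigl(\varepsilon_2(2k+\delta_1)+1\bigr)\bigl(\varepsilon_2\delta_2(2k+\delta_1+\delta_2)+\delta_1+\delta_2\bigr) \;\ge\; (1+\varepsilon_2\delta_1)(1+\varepsilon_2\delta_2)(\delta_1+\delta_2).
\]
Three applications of AM--GM ($1+\varepsilon_2\delta_i \ge 2\sqrt{\varepsilon_2\delta_i}$ and $\delta_1+\delta_2 \ge 2\sqrt{\delta_1\delta_2}$) bound the last product by $8\varepsilon_2\delta_1\delta_2$ from below; dividing by $2\varepsilon_2 r_0\delta_1\delta_2$ gives the right-hand side of~\eqref{psi} $\ge 4/r_0$. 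For $n\le 4$ one then combines
\[
-\psi'(1) \;\le\; \frac{n(r_0-1)}{r_0^2} \;\le\; \frac{4(r_0-1)}{r_0^2} \;<\; \frac{4}{r_0},
\]
using $r_0>1$ strictly (from~\eqref{rown_r0}), placing the system inside the stability region of Theorem~\ref{cond_stab}.

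\textbf{Part 2 ($n\ge 5$).} For $\psi(y) = 1/(1+(r_0-1)y^n)$ a direct differentiation gives $-\psi'(1) = n(r_0-1)/r_0^2$. At the limiting parameter point $(\delta_1,\delta_2,\varepsilon_2,k)=(1,1,1,0)$ the AM--GM chain used above is saturated, so the right-hand side of~\eqref{psi} equals exactly $4/r_0$. The instability criterion~\eqref{psi:nstab} then reads $n(r_0-1)>4r_0$, which for $n\ge 5$ is equivalent to $r_0>n/(n-4)$. Both sides of~\eqref{psi:nstab} depend continuously on $(\delta_1,\delta_2,\varepsilon_2,k)$, so the strict inequality persists in an open neighbourhood of $(1,1,1,0)$; on this neighbourhood the remark after Theorem~\ref{cond_stab} yields instability.

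\textbf{Main obstacle.} The only delicate step is the sharp AM--GM estimate in Part~1: the algebraic regrouping that peels off the factor $(1+\varepsilon_2\delta_2)$ from the second parenthesis, combined with dropping the $2k$ terms, is essential to recover the constant $8$ --- any coarser bound loses a factor and would fail to cover the borderline case $n=4$. The mirror of this difficulty appears in Part~2: the comparison at $(1,1,1,0)$ is precisely the equality case of the same AM--GM, so the argument is razor-tight and can only be extended off the limit by invoking continuity in a (possibly small) neighbourhood of the symmetric point.
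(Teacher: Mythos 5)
Your proposal is correct and follows essentially the same route as the paper: the bound $-\psi'(1)\le n(r_0-1)/r_0^2$ is exactly Lemma~\ref{lem:maxdg}, your AM--GM lower bound $4/r_0$ on the right-hand side of~\eqref{psi} is the same estimate as~\eqref{est:for:second:term} (the paper writes it as $\frac{(\delta_1+\delta_2)^2}{2\delta_1\delta_2}+\frac12\sum(x+\frac1x)\ge 4$, which is the same AM--GM), and your Part~2 equality-at-$(1,1,1,0)$ plus continuity argument mirrors the paper's observation that the estimates become equalities there and that $q(y)=(r_0-1)y^n$ saturates Lemma~\ref{lem:maxdg}. No gaps; the only cosmetic difference is that the paper phrases Part~1 as a contradiction yielding $n\ge 4r_0/(r_0-1)>4$, while you compare $-\psi'(1)<4/r_0$ directly.
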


\begin{lem}\label{lem:maxdg}
   Let $\psi(y) = \frac{1}{1+q(y)}$, where $q$ is a~polynomial of $n$-th degree with 
   all coefficient non-negative such that $q(0)=0$. Then 
   \[
   -\psi'(1) \le \frac{nq(1)}{(1+q(1))^2}.
   \]
\end{lem}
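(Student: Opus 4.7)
The plan is to reduce the inequality to the elementary bound $q'(1) \le n\,q(1)$ and obtain it by comparing the two sums term-by-term.

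First I would differentiate $\psi$ directly: since $\psi(y) = \frac{1}{1+q(y)}$, we have
\[
\psi'(y) = -\frac{q'(y)}{(1+q(y))^2},
\]
so the claim is equivalent to showing
\[
\frac{q'(1)}{(1+q(1))^2} \le \frac{n\,q(1)}{(1+q(1))^2},
\]
which, after clearing the (positive) common denominator, reduces to $q'(1) \le n\,q(1)$.

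Next I would write $q(y) = \sum_{j=1}^n a_j y^j$ with $a_j \ge 0$ (the hypothesis $q(0)=0$ kills the constant term and non-negativity of the coefficients is given). Then $q(1) = \sum_{j=1}^n a_j$ and $q'(1) = \sum_{j=1}^n j\, a_j$. Because every index $j$ in the sum satisfies $1 \le j \le n$ and every $a_j$ is non-negative, one has
\[
q'(1) = \sum_{j=1}^n j\, a_j \le n \sum_{j=1}^n a_j = n\,q(1),
\]
which concludes the proof.

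There is no real obstacle here; the whole point is that non-negativity of the coefficients allows the termwise bound $j\,a_j \le n\,a_j$ to pass to the sum. Equality is attained precisely when $a_j = 0$ for $j < n$, i.e.\ when $q(y) = a_n y^n$, which is the regime that will be used in part~(ii) of Theorem~\ref{thm:stabn} with the choice $\psi(y) = 1/(1+(r_0-1)y^n)$.
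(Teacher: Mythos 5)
Your proposal is correct and follows essentially the same route as the paper: both reduce the claim to $q'(1)\le n\,q(1)$ after computing $\psi'(1)=-q'(1)/(1+q(1))^2$, and both obtain that bound from the non-negativity of the coefficients (your termwise estimate $j\,a_j\le n\,a_j$ is just a rephrasing of the paper's identity $q'(1)=n\,q(1)-\sum_{j=1}^{n-1}(n-j)a_j$). Your remark on when equality holds matches the paper's later use of $q(y)=a\,y^n$ in the instability part of Theorem~\ref{thm:stabn}.
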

\begin{proof}
   Let us fix the value $q(1)$. We obtain that 
   \[
   \psi'(1)=-\frac{q'(1)}{(1+q(1))^2}.
   \]
   We need to find an upper bound of the numerator of $|\psi'(1)|$. 
   Let $q(y) = \displaystyle\sum_{j=1}^n a_j x^j$. Then this
   numerator reads $\displaystyle\sum_{j=1}^n j a_j$. Since all coefficients $a_j$ of $q$ are non-negative, we have
   \[
   q'(1)=\sum_{j=1}^n j a_j=nq(1)-\sum_{j=1}^{n-1} (n-j)a_j\le n q(1),
   \]
   which proves our assertion.
\qed
\end{proof}

\begin{prthm}
  Let us prove the first part of our theorem.
	By~\eqref{rown_r0} and~\eqref{bart:wzor_psi_ogolny} we get $\psi(1)=1/r_0.$
  Using notation of Lemma~\ref{lem:maxdg} we get $r_0=1+ q(1)$. Thus,
  due to Lemma~\ref{lem:maxdg} we obtain
   \[
      -\psi'(1)\le \frac{n(r_0-1)}{r_0^2}.
   \]
   Suppose that the steady state is not locally asymptotically stable. Then, the inequality reverse to~\eqref{psi} holds and we get 
    \[
      \frac{n(r_0-1)}{r_0^2}\ge -\psi'(1)\geq  \frac{\Bigl(\varepsilon_2\bigl(2k+\delta_1\bigr)+1\Bigr)
         \Bigl(\varepsilon_2\delta_2\bigl(2k+\delta_1+\delta_2\bigr)+\delta_1+\delta_2\Bigr)}
      {2\varepsilon_2 r_0 \delta_1\delta_2}.
      \]
    Thus,  
    \begin{equation}\label{nier:na:n}
    n \ge \frac{r_0}{r_0-1} \frac{\Bigl(\varepsilon_2\bigl(2k+\delta_1\bigr)+1\Bigr)
       \Bigl(\varepsilon_2\delta_2\bigl(2k+\delta_1+\delta_2\bigr)+\delta_1+\delta_2\Bigr)}
    {2\varepsilon_2 \delta_1\delta_2}.
    \end{equation}
    Observe, that as $k>0$, the second term of~\eqref{nier:na:n} can be estimated as
    \begin{equation}\label{est:for:second:term}
      \frac{\Bigl(\varepsilon_2\bigl(2k+\delta_1\bigr)+1\Bigr)
         \Bigl(\varepsilon_2\delta_2\bigl(2k+\delta_1+\delta_2\bigr)+\delta_1+\delta_2\Bigr)}
      {2\varepsilon_2 \delta_1\delta_2}\ge 
      \frac{\bigl(\delta_1+\delta_2\bigr)^2}{2\delta_1\delta_2} + \frac{1}{2}\Biggl(
      \biggl(\frac{1}{\varepsilon_2\delta_1} + \varepsilon_2\delta_1\biggr)+
      \biggl(\frac{1}{\varepsilon_2\delta_2} + \varepsilon_2\delta_2\biggr) \Biggr)\ge 4.
    \end{equation}
    Combining~\eqref{nier:na:n} with \eqref{est:for:second:term} we obtain
    \[
      n \ge \frac{4r_0}{r_0-1} > 4,
    \]
		which contradicts our assumption.
		
   To prove the second part of our theorem we observe that inequalities~\eqref{est:for:second:term} becomes equalities 
    for $\varepsilon_2=\delta_1=\delta_2=1$ and $k=0$. Thus, for $n\ge 5$, for $r_0$ sufficiently large, that is grater than
    \[
       r_0>\frac{n}{n-4}
    \] 
    and for sufficiently small $k$ inequality~\eqref{nier:na:n} holds. As inequality proved in Lemma~\ref{lem:maxdg} becomes equality for $q(y)=a y^n$, inequality \eqref{psi:nstab} holds for $n\ge 5,$ $k$ sufficiently small, and $\delta_1$, $\delta_2$ sufficiency close to $1$. This completes the proof.    
\qed
\end{prthm}

\section{Discussion and conclusions}\label{sec:concl}

The theorems stated in the previous sections show, that the dynamics of the reduced model (a slow part of model) is similar to the model before reduction (that contains both parts: slow and fast) if the ,,fast'' subsystem is sufficiently fast (that is $\varepsilon_1$ or $\varepsilon_2$ is sufficiently small). On the other hand, we have also showed that the dynamics of such model can differ from the dynamics of the reduced system. In particular, in the classical Hes1 system~\eqref{zreduk} without time delay the positive steady state is always (globally) asymptotically stable. However, if we take into account Hes1 dimers as a separate population, that is system~\eqref{uposzcz:n}, we can see that the steady state can loose stability and oscillatory behaviour is possible (if the number of binding sites is grater than 4). Now, we numerically illustrate similarities and differences in behaviour of solutions to the considered models.

\subsection{Numerical simulations}
In order to show behaviour of four models considered in this paper
(full, without dimers, with dimers and classical),
basing on data from~\cite{monk03currbiol}, 
we choose the following parameters
 \begin{equation}\label{par_wsp}
      \delta_1 = 0.2242, \quad 
      \delta_2 = 0.2075, \quad 
      \theta =0.5 \quad \text{and} \quad 
      \varepsilon_1=1.
 \end{equation}  
Our models, before scaling, consist a vast number of parameter. Estimating those parameter may be a challenging problem that we are not going to address here. Our aim is only to illustrate possible models' behaviours. Thus, we choose the parameter values only for the non-dimensional version of our models ensuring that the ratio between $\delta_1$ and $\delta_2$ agrees with protein decay and mRNA decay rates considered in~\cite{monk03currbiol}. We illustrate the behaviour of considered models for two different situations.
In the first case, we consider the situation, when the steady states of all models are stable and oscillatory behaviour is not possible. 
We put 
 \begin{equation}\label{par1}
   k = 0.2, \quad   
   \varepsilon_2 = 1, \quad 
   k_0 = 1, \quad 
   k_1 = 1.5, \quad 
   k_2 = 1.5,\quad 
   \gamma_1 = 1,\quad 
   \gamma_2 = 0.5, \quad 
   \gamma_3 = 0.5.  
 \end{equation}
Here, we take $\gamma_i$ and $k_i$ to reflect the cooperative character of the binding of dimer to DNA promoter. Thus, binding ratio is larger  and the dissociation parameter is smaller if at least one binding site is occupied. The result of an exemplary simulation is presented in Fig.~\ref{fig:zbieznosc}. Although the difference in solutions to different models is visible, the qualitative behaviour of those solutions are the same. In particular, all solutions converge to the stationary state. 

\begin{figure}[bht]
   \centerline{\includegraphics{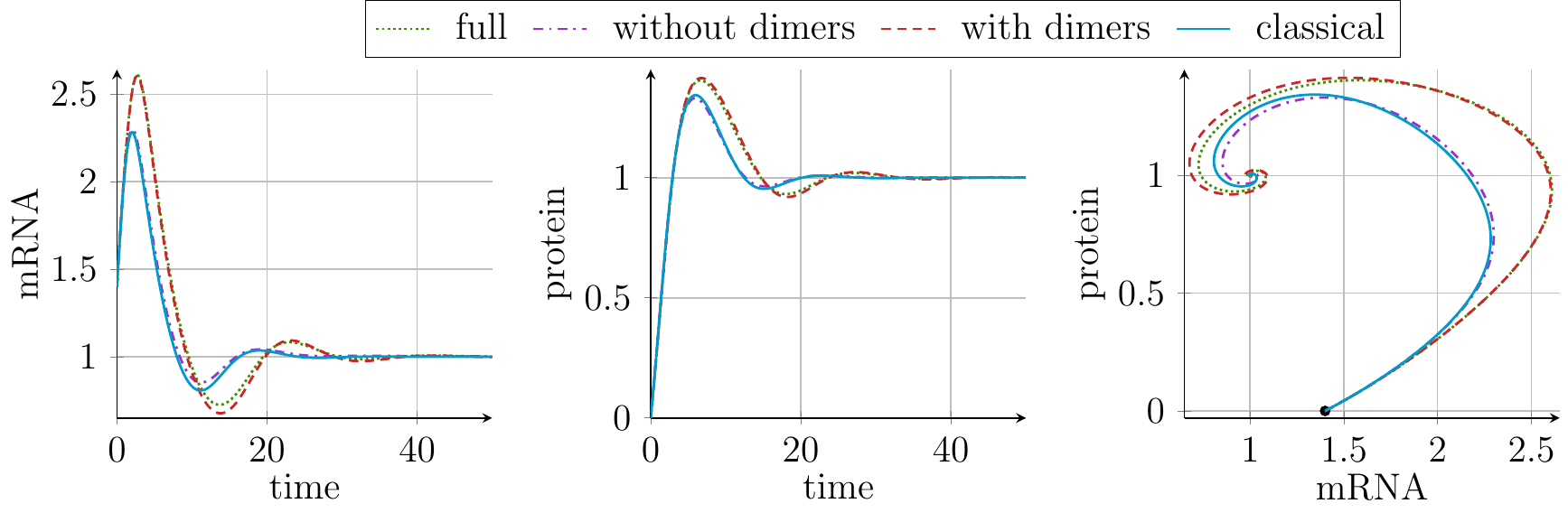}}
   \caption{Comparison of solutions to system considered in the paper for the case $n = 3$. Parameters as given in~\eqref{par_wsp} and~\eqref{par1}.}\label{fig:zbieznosc}
\end{figure}
 
On the other hand, we also want to illustrate possible oscillatory behaviour of system with dimers~\eqref{uposzcz:n} if the derivative of function $\psi$ (given by~\eqref{bart:wzor_psi_ogolny}) at the stationary state is large enough. In order to do that we need to take $n\ge 5$. We decided to take the smallest possible $n=5$, but then we need to take $k$ small enough and appropriate $k_i$ and $\gamma_i$ such that the inequality revers to~\eqref{psi} holds. We take
 \begin{equation}\label{par2}
    k = 0.01, \quad   
    \varepsilon_2 = 5, \quad 
    k_0 = 1, \quad 
    k_1 = 2, \quad 
    k_2 = 3,\quad 
    k_3 = 4, \quad 
    k_4 = 700, \quad 
    \gamma_1 = 2,\quad 
    \gamma_i = 1, \; i=2,\ldots,5.\quad 
 \end{equation}
 The results of the simulations are presented in Fig.~\ref{fig:oscylacje}. The interesting fact is that only solutions to~\eqref{uposzcz:n} exhibits oscillatory behaviour. Of course, if we take $\varepsilon_1$ sufficiently close to zero, the oscillatory behaviour appears also for
the full model~\eqref{ogolny:epsilony_red}. 

\begin{figure}[h!]
   \centerline{\includegraphics{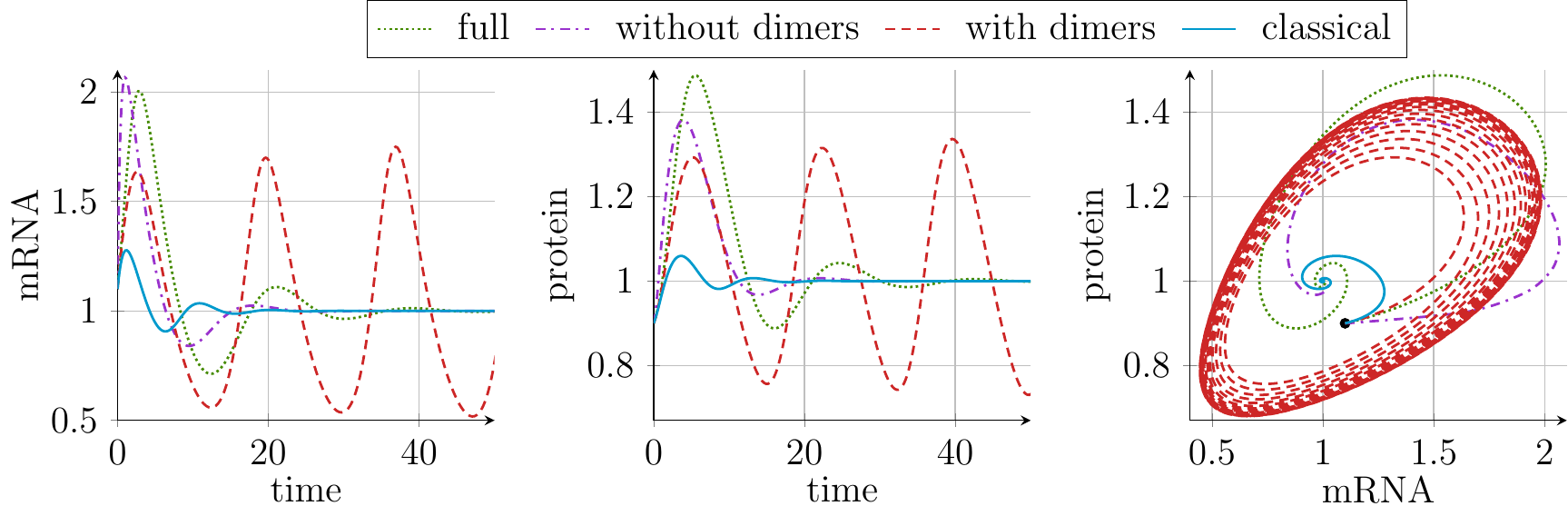}}
	\caption{Comparison of solutions to system considered in the paper for the case $n = 5$. Parameters as given in~\eqref{par_wsp} and~\eqref{par2}.}\label{fig:oscylacje}
\end{figure}

Note that we need to take $k_4$ very large in order to fulfil condition~\eqref{psi:nstab} for $n=5$ and $\delta_1$, $\delta_2$ as in~\eqref{par_wsp}. This causes that we need very small $\varepsilon_1$ to get an agreement ib behaviour of solutions to the model with dimers and to the full model. However, if we consider larger number of binding sites, we may take $k_i$ of order 1. For example, for $n=9$ of binding sites and $k_i=i+1$ we observe oscillatory behaviour of the solution to the model with dimers and dumping oscillation in the full model for $\varepsilon_1=1$ (compare the upper row of Fig.~\ref{fig:oscylacje2}) or oscillation to both models for $\varepsilon_1=0.05$ (compare the bottom row of Fig.~\ref{fig:oscylacje2}). 

\begin{figure}[bht]
   \centerline{\includegraphics{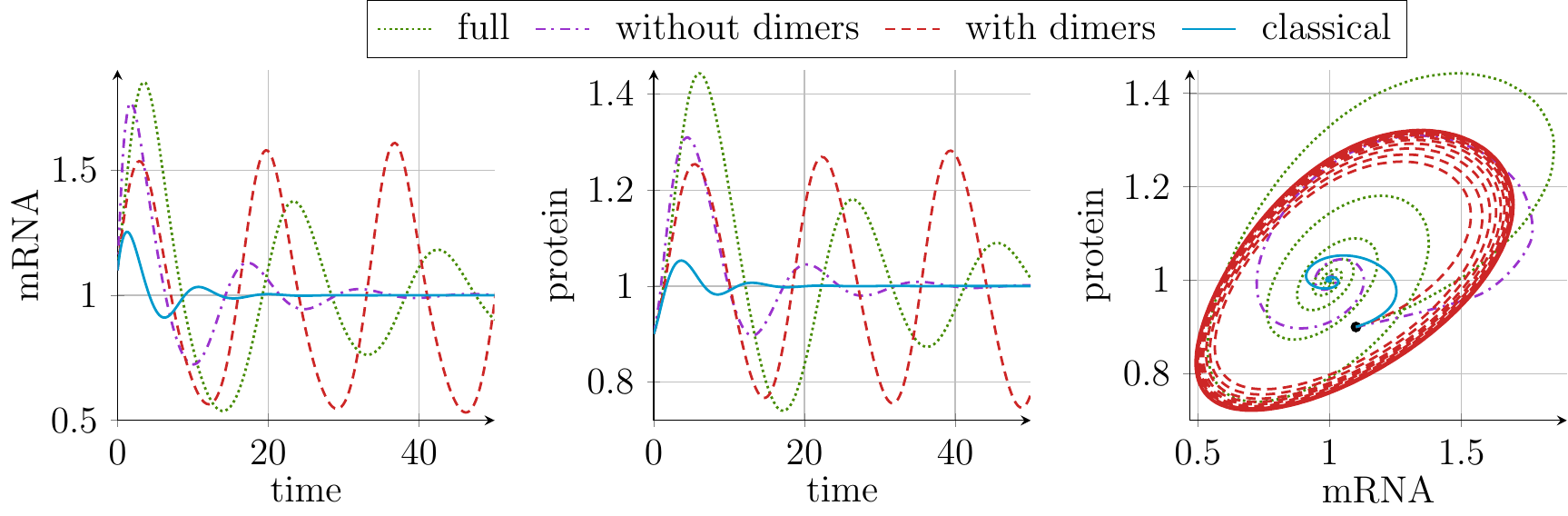}}
   \centerline{\includegraphics{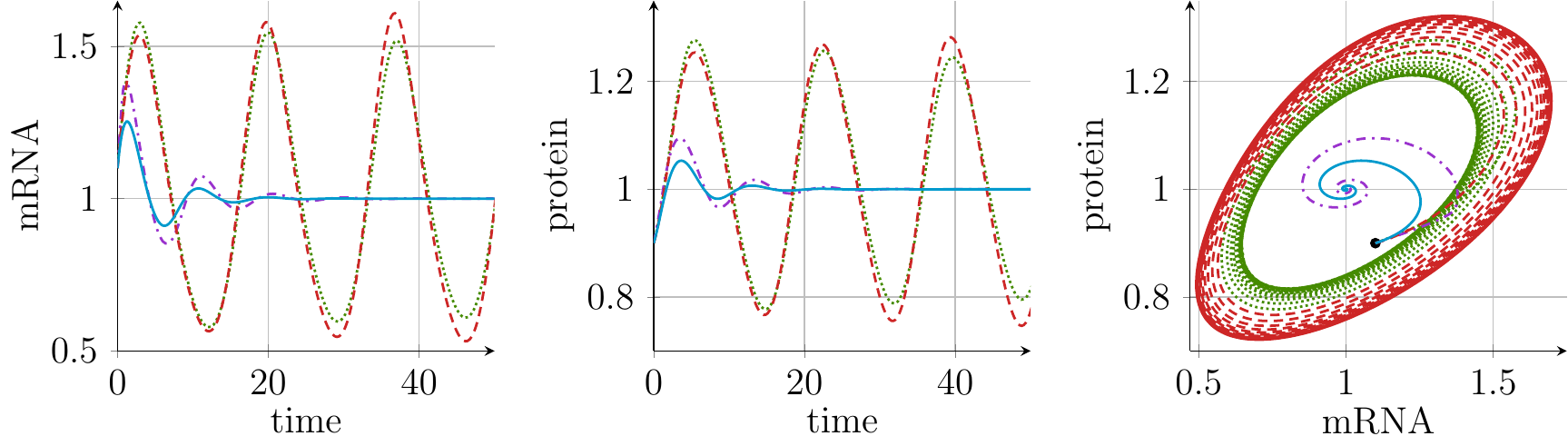}}
   \caption{Comparison of solutions to system considered in the paper for the case $n = 9$. Parameters as given in~\eqref{par_wsp} and~\eqref{par2}, except that $k_i=i+1,$ $\gamma_i=1$ and $\varepsilon_1=1$ (upper row) or $\varepsilon_1=0.05$ (lower row).}\label{fig:oscylacje2}
\end{figure}

\subsection{Conclusions}
In this paper we examine a gene expression model of the self-regulating Hes1 protein. We take into account the existence of multiple binding sites in the Hes1 promoter and the model of the transcriptional and translational processes. In our model, we include negative feedback and we take into account the formation of dimer complexes by proteins before blocking DNA. This model is more complex than the one studied earlier in~\cite{bernard06philtrans,mbab12nonrwa,monk03currbiol}. Using the Tikhonov theorem we show that if the process of binding to and dissolving from the binding sites and the process of dimer formation are much faster then the complex full model can be reduced to more simple ones. In particular we rigorously justify that formal quasi-stationary approach for simplifying models make sense in this case. We also examine the dynamics of the considered models. It turn out that the dynamics of model in which we take into account dimer formation and separate population of dimers, differs from the dynamics of classical Hes1 model. For some range of parameter the oscillatory behaviour is possible. This is not the case of classical Hes1 model proposed by Monk~\cite{monk03currbiol} unless time delay is taken into account. However, our findings show that the multiple binding sites are needed to get such behaviour. We show, that only one binding site, the steady state is stable in all considered models. On the other hand, for the reduced model with dimer formation~\eqref{uposzcz:n}, the minimal number of biding sites that would allow such behaviour (if neglecting time delay) is 5. 

Summarizing, we prove that the quasi-stationary approach is justified for this family of models, and we observe that the number of active transcription factor binding sites is essential for the dynamical behaviour of the Hes1 model.

\appendix
\section{The calculation needed in the proof of stability of the steady state of system~\eqref{sys:n=1}}
Here we include the calculations that proves the stability of the steady state of system~\eqref{sys:n=1}. The notation is as in Section~\ref{sec:fulln1}, in particular $a_i$ are defined by~\eqref{coeff}.
\[
\begin{split}
&a_3\bigl(a_1a_2-a_3\bigr)-a_1^2a_4 =\\
& 
\varepsilon_2 \na{\Bigl(}\varepsilon_2^2 \nb{\Bigl(}\zeta \delta_2^2 \varepsilon_2^2+\nc{\Bigl(}\varepsilon^2+
\nd{\Bigl(}\theta  \bar x_0+\varepsilon  \gamma_1\nd{\Bigr)} \nd{\Bigl(}\zeta+\varepsilon+1\nd{\Bigr)}+1\nc{\Bigr)} \delta_2 \varepsilon_2+
\varepsilon\zeta \nc{\Bigl(}\gamma_1+1\nc{\Bigr)} 
\nb{\Bigr)} \delta_1^3\\
&+\varepsilon_2 
\nb{\Bigl(}\delta_2^2 \nc{\Bigl(}2 k \nd{\Bigl(}3\zeta-1\nd{\Bigr)}+
\zeta \delta_2\nc{\Bigr)} \varepsilon_2^3+
2 \delta_2 \nc{\Bigl(}k\nd{\Bigl(}3 \varepsilon ^2+\nf{\Bigl(}\theta  \bar x_0+\varepsilon  \gamma_1\nf{\Bigr)}
\nf{\Bigl(}\zeta+3\varepsilon+1\nf{\Bigr)}+1\nd{\Bigr)}+\nd{\Bigl(}\varepsilon ^2+\nf{\Bigl(}\theta 
\bar x_0+\varepsilon  \gamma_1\nf{\Bigr)} \nf{\Bigl(}\zeta+\varepsilon+1\nf{\Bigr)}+1\nd{\Bigr)} \delta_2\nc{\Bigr)} \varepsilon_2^2\\
&+
\nc{\Bigl(}2 k \varepsilon  \nd{\Bigl(}\gamma_1+1\nd{\Bigr)} \nd{\Bigl(}\zeta-1\nd{\Bigr)}+\zeta 
\nd{\Bigl(}\theta ^2 \bar x_0^2+2 \theta  \nf{\Bigl(}\gamma_1 \varepsilon +\varepsilon +1\nf{\Bigr)}
\bar x_0+
(\varepsilon -1) \varepsilon +\varepsilon  \gamma_1 \nf{\Bigl(}\gamma_1
\varepsilon +2 \varepsilon +3\nf{\Bigr)}+1\nd{\Bigr)} 
\delta_2\nc{\Bigr)} \varepsilon
_2+\varepsilon  \nc{\Bigl(}\gamma_1+1\nc{\Bigr)} \zeta^2\nb{\Bigr)} \delta_1^2+\\
&+
\nb{\Bigl(}2 k \delta_2^2 \nc{\Bigl(}6k\zeta-4k+\delta_2(\zeta-1)\nc{\Bigr)} \varepsilon_2^4+
\delta_2
\nc{\Bigl(}4\nd{\Bigl(}3\varepsilon^2+(\theta\bar x_0+\varepsilon\gamma_1)(3\zeta+3\varepsilon-1)\nd{\Bigr)} k^2+\\
&+
2 \nd{\Bigl(}4 \theta ^2 \bar x_0^2+2 \theta 
\nf{\Bigl(}4 \gamma_1 \varepsilon +4 \varepsilon +3\nf{\Bigr)} \bar x_0+\varepsilon  (4
\varepsilon +3)+\varepsilon  \gamma_1 \nf{\Bigl(}4 \gamma_1 \varepsilon +8
\varepsilon +7\nf{\Bigr)}+2\nd{\Bigr)} \delta_2 k+\\
&+
\nd{\Bigl(}\varepsilon
^2+\nf{\Bigl(}\theta  \bar x_0+\varepsilon  \gamma_1\nf{\Bigr)} \nf{\Bigl(}\zeta+\varepsilon+1\nf{\Bigr)}+1\nd{\Bigr)} \delta
_2^2\nc{\Bigr)} \varepsilon_2^3+\nc{\Bigl(}4 \varepsilon  \nd{\Bigl(}\gamma
_1+1\nd{\Bigr)} \nd{\Bigl(}\zeta-1\nd{\Bigr)} k^2+\\
&+
2 \nd{\Bigl(}2 \varepsilon ^3 \gamma_1^3+\varepsilon ^2
\nf{\Bigl(}6 \varepsilon +6 \theta  \bar x_0+7\nf{\Bigr)} \gamma_1^2+\varepsilon 
\nf{\Bigl(}2 (\varepsilon +1) (3 \varepsilon +2)+\theta  \bar x_0 \nh{\Bigl(}12
\varepsilon +6 \theta  \bar x_0+11\nh{\Bigr)}\nf{\Bigr)} \gamma_1+\varepsilon ^2 (2\varepsilon +3)+\\
&+
\theta  \bar x_0 \nf{\Bigl(}\varepsilon  (6 \varepsilon +7)+2 \theta
\bar x_0 \nh{\Bigl(}3 \varepsilon +\theta  \bar x_0+2\nh{\Bigr)}+2\nf{\Bigr)}\nd{\Bigr)}
\delta_2 k+\\
&+
\nd{\Bigl(}\zeta\nd{\Bigr)} \nd{\Bigl(}\theta ^2 \bar x_0^2+2 \theta  \nf{\Bigl(}\gamma_1
\varepsilon +\varepsilon +1\nf{\Bigr)} \bar x_0+(\varepsilon -1) \varepsilon +\varepsilon
\gamma_1 \nf{\Bigl(}\gamma_1 \varepsilon +2 \varepsilon
+3\nf{\Bigr)}+1\nd{\Bigr)} \delta_2^2\nc{\Bigr)} \varepsilon_2^2+\\
&+
2 \varepsilon 
\nc{\Bigl(}\gamma_1 \delta_2\zeta^2+k \nd{\Bigl(}\gamma_1+1\nd{\Bigr)} \nd{\Bigl(}\zeta^2-1-\theta\bar x_0\nd{\Bigr)} \varepsilon_2+\varepsilon ^2
\nc{\Bigl(}\gamma_1+1\nc{\Bigr)}^2 \zeta\nb{\Bigr)} \delta_1+\\
&+
\delta_2 \nb{\Bigl(}\zeta +2 k \varepsilon_2\nb{\Bigr)}
\nb{\Bigl(}\gamma_1 \varepsilon +\varepsilon +2 k \nc{\Bigl(}\zeta-1\nc{\Bigr)} \varepsilon_2\nb{\Bigr)} \nb{\Bigl(}\gamma_1
\nc{\Bigl(}\nd{\Bigl(}2 k+\delta_2\nd{\Bigr)} \varepsilon_2+1\nc{\Bigr)} \varepsilon
+\varepsilon +
\varepsilon_2 \nc{\Bigl(}2 k \varepsilon +(\varepsilon +1) \delta_2+\nd{\Bigl(}2 k+\delta_2\nd{\Bigr)}\nd{\Bigl(}\theta  \bar x_0+\delta_2
\varepsilon_2\nd{\Bigr)}\nc{\Bigr)}\nb{\Bigr)}\na{\Bigr)},
\end{split}
\]
where 
\[
\zeta= \varepsilon  \gamma_1+\varepsilon+\theta  \bar x_0+1.
\]
Since the following estimation holds
\[
(\varepsilon -1) \varepsilon +\varepsilon  \gamma_1 \Bigl(\gamma_1 \varepsilon +2 \varepsilon +3\Bigr)+1
\ge (\varepsilon -1) \varepsilon +1 =
\varepsilon^2 -\varepsilon + 1 \ge 0
\]
for all $\varepsilon\ge 0,$ an easy calculation shows that
$a_3(a_1a_2-a_3)-a_1^2a_4>0.$

\section*{Acknowledgements}
This work was partially supported by grant no. 2015/19/B/ST1/01163 of National Science Centre, Poland (MB) and by The Warsaw Center of Mathematical and Computer Science program ``Guests and small scientific meetings'' (AB). 

\bigskip
%

\end{document}